\newtheorem{proposition}{Proposition}[section]
\newtheorem{remark}{Remark}[section]
\newtheorem{lemma}{Lemma}[section]
\numberwithin{equation}{section}
\newtheorem{theorem}{Theorem}[section]
\newtheorem{corollary}{Corollary}[section]
\newcommand{\RR}{\mathbb{R}}
\newcommand{\beqn}{\begin{equation}}
\newcommand{\eeqn}{\end{equation}}
\newcommand{\bear}{\begin{eqnarray}}
\newcommand{\eear}{\end{eqnarray}}
\newcommand{\bean}{\begin{eqnarray*}}
\newcommand{\eean}{\end{eqnarray*}}
\newcommand{\cE}{\mathcal{E}}
\begin{document}
\title{Uniform in time  lower bound for solutions to a quantum Boltzmann equation of bosons }

\author{Toan T. Nguyen\footnotemark[1] \and Minh-Binh Tran\footnotemark[2] 
}

\renewcommand{\thefootnote}{\fnsymbol{footnote}}

\footnotetext[1]{Department of Mathematics, Pennsylvania State University, State College, PA 16802, USA. \\Email: nguyen@math.psu.edu. 
}

\footnotetext[2]{Department of Mathematics, University of Wisconsin-Madison, Madison, WI 53706, USA. \\Email: mtran23@wisc.edu
}

\maketitle
\begin{abstract} 
In this paper, we consider a quantum Boltzmann equation, which describes the growth of the condensate or the interaction between excited atoms and a condensate. The full form of the Bogoliubov dispersion law is considered, which leads to a detailed study of surface integrals inside the collision operator on energy manifolds. We prove that nonnegative radially symmetric  solutions of the quantum Boltzmann equation are bounded from below by a Gaussian distribution, uniformly in time. \end{abstract}

{\bf Keyword:}
Low and high temperature quantum kinetics; Bose-Einstein  condensate; quantum Boltzmann equation;  Peierls equation; quantum theory of solids; quantum phonon equation. 

{\bf MSC:} {82C10, 82C22, 82C40.}


\section{Introduction}
In 1995, the discovery of Bose-Einstein condensation (BEC) in trapped ultracold atomic gases in 1995 \cite{WiemanCornell,Ketterle} has led to an explosion of research on its properties. A kinetic equation for BECs was first derived by Kirkpatrick and Dorfmann \cite{KD1,KD2},  by  a mean field theory and the Green's function method. Following the path of Kirkpatrick and Dorfmann, several authors have tried to derive  kinetic equations to describe the dynamics of BECs \cite{IG,ArkerydNouri:2012:BCI,MR1837939,ArkerydNouri:2015:BCI,KD2,PomeauBrachetMetensRica,GriffinNikuniZaremba:BCG:2009,ReichlGust:2013:TTF,ReichlGust:2012:CII}. 
In the series of papers \cite{QK1,QK2,QK3},  C.W. Gardiner, P. Zoller and coauthors have formulated the Quantum Kinetic Theory, which is both a genuine kinetic theory and a genuine quantum theory, in terms of the Quantum Kinetic Master Equation (QKME) for bosonic atoms.  In the Quantum Kinetic Theory, the significant quantum aspects are restricted to a few modes, the remaining modes being able to be described in the classical way as in the Boltzmann equation. Indeed, the kinetic aspect of the theory arises
from the decorrelation between different momentum bands. The Quantum Kinetic Theory provides a fully quantum mechanical description of the kinetics of a Bose gas, including the regime of a Bose condensation. In particular, the QKME is capable of describing the formation of the Bose condensate. The QKME contains as limiting cases both the Boltzmann-Norheim (Uehling-Ulenbeck)  equation \cite{EscobedoVelazquez:2015:FTB,UehlingUhlenbeck,Nordheim}, the Gross-Pitaevskii equation, and the condensate growth term. The condensate growth term  is indeed the principal term which gives rise to growth of
the condensate, 
by taking atoms out
of the bath of warmer atoms. 

\bigskip


In the following, we briefly recall the kinetic theory of a gas of bosons, more details can be found in \cite{GriffinNikuniZaremba:BCG:2009,JinBinh,ReichlTran,SofferBinh1}. Bosons  of mass $m$ at temperature $T$ can be regarded as quantum-mechanical
wavepackets whose extent is proportional to a thermal de Broglie wavelength $$\lambda_{dB} =
\left(\frac{2 \pi \hbar^2}{m k_B T}\right)^\frac12$$ describing the position uncertainty associated with the thermal momentum
distribution, in which $k_B$ is the Boltzmann constant and $\hbar$ is the Planck constant.  When the gas temperature $T$ is high, the  de Broglie wavelength $\lambda_{dB}$ is very small and  the weakly interacting gas is similar with a system of ``billiard balls''. The dynamics of the density function of the gas $f(t,r,p)$ - the probability of finding a particle at time $t$, position $r$ and momentum $p$ - is described by the Boltzmann-Norheim (Uehling-Ulenbeck) equation
\begin{equation}\label{BN}
\partial_t f(t,r,p) + p \cdot\nabla_r f(t,r,p) = \mathcal{C}_{22}[f](t,r,p), \ \ \ f(0,r,p)= f_0(t,r,p), \ \ \ (t,r,p) \in\mathbb{R}_+\times\mathbb{R}^3\times\mathbb{R}^3,
\end{equation} whose operator sometimes reads 
\begin{equation}\label{QBHightT}
\begin{aligned}
\mathcal{C}_{22}[f](t,r,p_1)\ =& \ \iiint_{\mathbb{R}^{3}\times\mathbb{R}^{3}\times\mathbb{R}^{3}}\delta({p}_1+{p}_2-{p}_3-{p}_4)\delta(\mathcal{E}_{{p}_1}+\mathcal{E}_{{p}_2}-\mathcal{E}_{{p}_3}-\mathcal{E}_{{p}_4})\times\\
\ & \ \times [(1+\vartheta f_1)(1+\vartheta f_2)f_3f_4-f_1f_2(1+\vartheta f_3)(1+ \vartheta f_4)]d p_2d p_3d p_4,
\end{aligned}
\end{equation}
where $\vartheta$ is proportional to $\hbar^3$, $\mathcal{E}_{{p}}$ is the energy of a particle with momentum $p$ and we use the short-hand notation  $f_j= f(t,r,p_j)$. 

The quantum Boltzmann collision operator \eqref{QBHightT} becomes the classical one in the semiclassical limit, as $\vartheta$ tends to $0$. A consequence of this fact is that at  high temperature, the behavior of the Bose gas is, in some sense, quite similar to classical gases.  Note that, different from  classical Boltzmann collision operators, where the collision kernels are functions depending on the types of particles considered, the derived collision kernel for the quantum Boltzmann collision operator for bosons is 1.

When the temperature $T$ becomes lower, $\lambda_{dB}$ becomes smaller. At the BEC transition temperature $T\approx T_{BEC}$, the  de Broglie wavelength becomes comparable to the distance between
bosons. As a consequence, the atomic wavepackets ``overlap'' and the  atoms become indistinguishable. At this
temperature, bosons undergo a quantum-mechanical phase transition and the Bose-Einstein condensate is formed. The gas is said to be at finite  temperature if $T_{BEC}>T>0$K. At this temperature the trapped Bose gas is composed of two distinct components: the high-density {\it{ Bose-Einstein Condensate}} - being localized at the center
of the trapping potential, and the low-density
cloud of thermally {\it excited atoms,} spreading over a much wider region. The system of the coupling between the BEC and the excited atoms consists equations of the wave function $\Psi(t,r)$ of the BEC, which is a function of time and position $(t,r)$ and the  density function $f(t,r,p)$, which a function of time, position, and momentum of the excited atoms $(t,r,p)$. In such a system (cf. \cite{GriffinNikuniZaremba:BCG:2009,SofferBinh1})

\begin{itemize}
\item The wave function $\Psi$ of the BEC is governed by a Gross-Pitaevskii  equation.
\item The density distribution of the gas is governed by a Boltzmann equation, which has two collision operators (see Figure 1):
\begin{itemize}
\item $\mathcal{C}_{12}$ describes collisions of the BEC and the non-condensate (excited) atoms (condensate growth term).
\item $\mathcal{C}_{22}$ describes collisions between non-condensate (excited) atoms.
\end{itemize}
\end{itemize}

  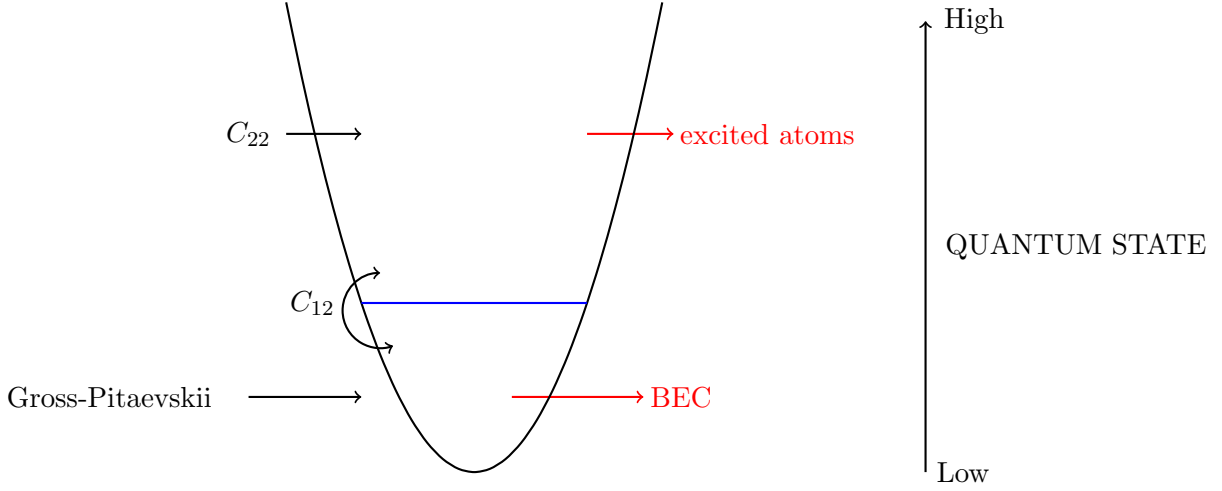
\begin{figure}

\begin{tikzpicture}
      \draw[thick, ->] (-2.5,4.5) -- ++(1,0) node[xshift=-1.5cm] {$C_{22}$};
      \draw[thick, ->, red] (1.5,4.5) -- ++(1.15,0) node[xshift=1.25cm] {excited atoms};
      \draw[thick, ->] (-3,1) -- ++(1.5,0) node[xshift=-3.35cm] {Gross-Pitaevskii };
      \draw[thick, ->, red] (0.5,1) -- ++(1.75,0) node[xshift=.5cm] {BEC};
      \draw[thick, ->] (6,0) node [xshift=.5cm]{Low} -- ++(0,3)  node[xshift=2.0cm] {QUANTUM STATE} -- ++(0,3) node[xshift=.65cm] {High};
      \draw[ thick,domain=-2.5:2.5,smooth,variable=\x,black] plot ({\x},{\x*\x});
      \draw [thick, blue] (-1.5, 2.25) -- (1.5, 2.25);

      \draw[thick, <->] (-1.25,2.65) arc (90:290:.5);
      \node at (-2.15, 2.25) {$C_{12}$};
    \end{tikzpicture}
    \label{fig}\caption{The Bose-Einstein Condensate (BEC) occupies the lowest quantum state and the excitations (thermal cloud) occupy higher quantum states.}
    
   \end{figure}

At very low temperatures, the interaction $\mathcal{C}_{22}$ between bosons themselves is weak as compared to that between excited bosons and the condensate, and thus negligible (cf. \cite{E,EPV,ArkerydNouri:2012:BCI,Arkeryd,ArkerydNouri:AMP:2013,ArkerydNouri:2015:BCI}.)  A simplified version of this model is to consider the coupling system of a spacial homogeneous Boltzmann equation for the density distribution of the excited atoms and an ordinary differential equation describing the density of the BEC superfluid  (cf. \cite{ArkerydNouri:2012:BCI,Arkeryd,ArkerydNouri:AMP:2013,ArkerydNouri:2015:BCI}). In this case, the density distribution function $f(t,p)$ for excited atoms at a time $t\ge 0$ and momentum $p\in \RR^3$ can be described by the following spatially homogenous quantum Boltzmann equation - the condensate growth term
\begin{eqnarray}\label{QuantumBoltzmann}
\frac{\partial f}{\partial t}&=&\mathcal{C}_{12}[f]
\end{eqnarray}
in which $\mathcal{C}_{12}[f]$ denotes the collision integral operator  (see Figure 1) that describes the bosons-condensate interaction \cite{E,IG,Allemand:DOF:2009,Allemand:Thesis:2009,KD2,KD3,ArkerydNouri:2012:BCI,ArkerydNouri:2015:BCI}, defined by 
\begin{equation}\label{E1}
\begin{aligned}
\mathcal{C}_{12}[f] (p) & = 
n_c(t)\iint _{ \RR^3 \times \RR^3}  \Big(\mathcal{R}_{p,p_1,p_2}[f] -\mathcal{R}_{p_1, p, p_2}[f]-\mathcal{R}_{p_2, p, p_1}[f] \Big ) \; dp_1dp_2, \\ 
f(0,p) & = f_0(t,p), \ \ \ (t,p) \in\mathbb{R}_+\times\mathbb{R}^3,
\end{aligned}
\end{equation}
with 
\begin{equation}\label{Kernel} 
\begin{aligned}
\mathcal{R}_{p, p_1, p_2}[f]&= \mathcal K(p, p_1, p_2) \Big( f_1f_2(1+f)-(1+f_1)(1+f_2)f \Big )
\\
\mathcal{K}(p, p_1, p_2)&=K^{12}(p,p_1,p_2)\delta (p-p_1-p_2)\delta (\mathcal{E}(p) -\mathcal{E}(p_1)-\mathcal{E}(p_2)) 
\end{aligned}\end{equation}
using the short-hand notation $f = f(t,p)$ and $f_j= f(t,p_j)$, where $n_c(t)$ is the time dependent density function of the condensate.

Here, $\delta(\cdot)$ denotes the Dirac delta function and $\mathcal{E} (p)$ is the Bogoliubov dispersion law for particle energy, under the assumption that the external potential is zero, which reads
\bear \label{def-E}
\mathcal{E} (p)=|p|\sqrt{\kappa_1  + \kappa_2 |p|^2}, \qquad \kappa_1=\frac{gN_o}{m}>0, \quad \kappa_2=\frac{1}{4m^2}>0,
\eear
for $m$ being the mass of the particles, $g$ the interaction coupling constant,  $N_o$ is assumed to be a constant. In the scope of the paper, for the sake of simplicity, we suppose that
\bear \label{def-EE}
\mathcal{E} (p)=|p|\sqrt{1  + |p|^2}. 
\eear
The transition probability kernel $$K^{12}(p,p_1,p_2)=|A^{12}(|p|,|p_1|,|p_2|)|^2$$ of $\mathcal{C}_{12}$  is given by the scattering amplitude (cf. \cite{E,ReichlGust:2012:CII,ReichlGust:2013:RRA,KD1,KD2,ReichlGust:2013:TTF})
\begin{equation*}
\begin{aligned}
&~~~A^{12}(|p|,|p_1|,|p_2|):=\\
:=&~(u_{p_2}-v_{p_2})(u_{p}u_{p_1}+ v_{p}v_{p_1})+(u_{p_1}-v_{p_1})(u_{p}u_{p_2}+ v_{p}v_{p_2}) - (u_{p}-v_{p})(u_{p_1}v_{p_2}+ v_{p_1}u_{p_2}),
\end{aligned}
\end{equation*}
where 
$$u^2_p=\frac{\frac{p^2}{2m}+gN_o+\mathcal{E}_p}{2\mathcal{E}_p},~~~ u^2_p-v^2_p=1.$$

However, the above form is quite complicated, in several contexts, one usually takes in \eqref{Kernel} the transition probability of the form 
\begin{equation}\label{Kernela}
K^{12}(p,p_1,p_2)={\kappa_0|p||p_1||p_2|}
\end{equation}
for some constant $\kappa_0>0$, which is a valid approximation at sufficiently low temperatures  \cite{EPV,IG,E}. However, {\it in our paper, we will consider a generalized form of \eqref{Kernela}}
 \begin{equation}\label{Kernel}
K^{12}(p,p_1,p_2)={|p|^\rho|p_1|^\rho|p_2|^\rho},\ \ \ \  \forall \rho\in[1,2]
\end{equation}
the constant $\kappa_0$ is omitted for the sake of simplicity.  
 
The Dirac delta function in \eqref{Kernel} ensures the conservation of momentum and energy after collision: 
\begin{equation}\label{cv} p = p_1 + p_2, \qquad \mathcal{E}(p) = \mathcal{E}(p_1) + \mathcal{E}(p_2). \end{equation} 
 In addition,  for the sake of simplicity, we shall take the constants $\kappa_0,\kappa_1,\kappa_2$ all to be one. The results in this paper apply to the general case when the constants are positive. 
 
 \bigskip

The density function $n_c(t)$ of the condensate satisfies the following equation

\begin{equation}\label{BEC}\begin{aligned} 
\dot{n}_c(t)  = -\int_{\mathbb{R}^3} \mathcal{C}_{12}[f](t,p)dp, \ \ \
n_c(0)  = n_{c,0}, \ \ \ t \in\mathbb{R}_+.
\end{aligned} 
\end{equation}
A discussion about the coupling system \eqref{QuantumBoltzmann}-\eqref{BEC} can be found in \cite{ArkerydNouri:2012:BCI}. 

Integrating equation \eqref{BN} with respect to $p$, then add it with equation \eqref{BEC}, we obtain
\begin{equation}
\label{MassConservation}
\partial_t \int_{\mathbb{R}^3}f(t,p)dp + \dot{n}_c(t)  = 0,
\end{equation} 
which implies
 \begin{equation}
\label{MassConservationa}
\int_{\mathbb{R}^3}f(t,p)dp + n_c(t)  = \int_{\mathbb{R}^3}f(0,p)dp + n_c(0) = \mathfrak{M}.
\end{equation}
A consequence of this fact is that both quantities $\int_{\mathbb{R}^3}f(t,p)dp$ and $n_c(t)$ are bounded uniformly in time by the total mass $\mathfrak{M}$ since they are both positive.

The conservation \eqref{MassConservationa} means that for each part of the system - the condensate and the noncondensate - the mass is not conserved. On the other hand, the mass of the full system is conserved.

\bigskip

In dielectric crystals, such as Si and GaAs,  electronic bands  are separated by an energy gap from the conduction band and are completely filled. As a consequence, one could suppress the electronic energy transport and the vibrations of the atoms around their mechanical equilibrium position becomes the dominant contribution to heat transport. In temperatures below the room temperature, these deviations are only a few percent of the lattice constant, and hence weakly anharmonic. Therefore, the obvious theoretical option, proposed by Peierls in 1929  \cite{Peierls:1960:QTS,Peierls:1993:BRK} is to regard the anharmonicities as a small perturbation to the perfectly harmonic crystal, in a certain sense, which leads to a kinetic description of an interacting ``gas of phonons'' using a nonlinear Boltzmann transport equation. This equation, which has exactly the same form with \eqref{QuantumBoltzmann}, is often called the phonon Boltzmann equation, and is normally used to describe the actual computation of the thermal conductivity of dielectric crystals.  The study of \eqref{QuantumBoltzmann} in the context of the quantum theory of solids or anharmonic crystals has also become a subject of growing interests  \cite{Spohn:TPB:2006,AlonsoGambaBinh,CraciunBinh}.

\bigskip
The model \eqref{QuantumBoltzmann} also shares a great similarity with three-wave kinetic models used in the weak turbulence theory \cite{Zakharov:1998:NWA,EscobedoVelazquez:2015:OTT,Spohn:WNW:2010,LukkarinenSpohn:WNS:2011,GambaSmithBinh,germain2017optimal,SofferBinh2,nguyen2017quantum}.  Let us also mention that the derivation of  the wave kinetic equation from the cubic nonlinear Schr\"odinger equation on the torus is also an important topic with rapidly growing interests \cite{germain2015high,FaouGermainHani:TWN:2016,germain2015continuous,germain2016continuous,buckmaster2016effective,buckmaster2016analysis}. 

\bigskip

Above the BEC critical temperature, the homogeneous version of \eqref{BN} takes the form
\begin{equation}\label{UU}
\frac{\partial f}{\partial t}=\mathcal{C}_{22}[f],~~~ f(0,p)=f_0(p), \forall p\in\mathbb{R}^3.
\end{equation}
This equation has a blow-up positive radial solution in the $L^\infty$ norm if the mass of the initial data is highly concentrated around the origin (cf. \cite{EscobedoVelazquez:2015:FTB}). In this temperature regime, the transition probability is the constant $1$ (cf. \cite{ReichlGust:2012:CII,gust2013transport}). The existence of a global weak and measure solution for the equation was studied in \cite{Lu:2004:OID,Lu:2005:TBE,Lu:2013:TBE}. In \cite{BriantEinav:2016:OTC}, local existence and uniqueness results, in the $L^\infty$  norm, were investigated.

Let us mention the  beautiful works \cite{ArkerydNouri:2012:BCI,ArkerydNouri:AMP:2013,ArkerydNouri:2015:BCI}, where the study of $\mathcal{C}_{12}$ has been carried on for the first time. In order to study $\mathcal{C}_{12}$,  the authors of \cite{AlonsoGambaBinh} have developed new methods, based on the techniques of propagation and creation of exponential and polynomial moments for $\mathcal{C}_{12}$. For the case of \eqref{def-E}, the well-posedness theory is obtained recently in \cite{SofferBinh1} for a more general quantum model that contains both $\mathcal{C}_{12}$ and $\mathcal{C}_{22}$. We also mention \cite{Binh9,CraciunBinh} where the convergence to equilibrium is studied for a linearized or discrete version of \eqref{QuantumBoltzmann}. In this paper, assuming the existence of solutions, we prove that {\em positive radial solutions to \eqref{QuantumBoltzmann}-\eqref{def-E} are uniformly bounded below by a Gaussian distribution.}


\bigskip

We emphasize that  in this work the full form of energy functions \eqref{def-E} is considered, which significantly complicates the analysis in treating the collision integral operator $\mathcal{C}_{12}[f]$. The integrals are now reduced to the surface integral on the energy surfaces, dictated by the conservation laws \eqref{cv}, consisting of all points $p_1$ so that 
$$ \cE(p) = \cE(p_1) + \cE(p-p_1)\qquad \mbox{or}\qquad \cE(p+p_1) = \cE(p) + \cE(p_1)$$
for each $p$; see Figures \ref{fig-Sp} and \ref{fig-Sp1} for an illustration of these surfaces. In addition to the complication of dealing with the surface integrals, it is certainly not clear whether the second moment of $f$ on these surfaces is bounded, even if the second moment of $f$ in $\mathbb{R}^3$ is bounded. As a matter of fact, due to this very reason, the simplified energy functions $\mathcal{E}(p) = c |p|$ or $\cE(p) = c|p|^2$ have been used in the literature; see, for instance, \cite{Allemand:DOF:2009,Allemand:Thesis:2009,EscobedoVelazquez:2015:FTB,ArkerydNouri:AMP:2013,EscobedoMischlerValle:HBI:2003} and the references therein. The former energy law leads to line integrals, whereas the latter reduces to integrals on a sphere, as it is the case for the classical Boltzmann equations (e.g., \cite{MR1942465,MouhotVillani:2004:RTS,EPV}).  Up to our knowledge, {\em the current paper is the first time where such a full energy of the form \eqref{def-E} is studied.} We also note that unlike \cite{ArkerydNouri:2012:BCI,Arkeryd,ArkerydNouri:AMP:2013,ArkerydNouri:2015:BCI}, the generalized transition probability $${|p|^\rho|p_1|^\rho|p_2|^\rho},\ \ \ \rho\in[1,2],$$ in this paper is as stated, without being truncated near zero or infinity.

\bigskip

Let us now present the main result of this paper. For $m\ge 1$, introduce the function space $\mathbb{L}_m^1(\RR^3)$, defined by its finite norm 
\begin{equation}\label{L1Space1}
\|f\|_{\mathbb{L}^1_{m}}:= \int_{\mathbb{R}^3}\left(1+\mathcal{E}(p)^m\right) |f(p)|dp,
\end{equation}
with $\cE(p) = |p|\sqrt{1+|p|^2}$. 

\begin{theorem}\label{TheoremLowerBound}
Let $f_0(p)=f_0(|p|)\ge 0$ be a positive radial initial datum in $\mathbb{L}^1_{m} (\RR^3)\cap C(\RR^3)$, for some $m \ge 1$, so that the Cauchy problem \eqref{QuantumBoltzmann}-\eqref{BEC} with the energy \eqref{def-E} has a unique classical positive radial solution $f(t,p) = f(t,|p|)\ge 0$ in $C([0,\infty),\mathbb{L}^1_{m} (\RR^3)\cap C(\RR^3))\cap C^1([0,\infty),\mathbb{L}^1_{m} (\RR^3)\cap C(\RR^3))$, $n_c\in C^1([0,\infty))$ and $n_c$ satisfies
$
n_0\le n_c(t),
$ for some strictly positive constant $n_0$.  Assume that $f_0(p)\geq\theta_0 $ on $B_{R_0}=\{|p|\leq R_0\}$ for some positive constants $\theta_0, R_0$. 
Then, for any time $T>0$, there exist positive constants $\theta_1, \theta_2$ such that 
\begin{equation}\label{TheoremLowerBoundE1}f(t,p)\geq \theta_1\exp(-\theta_2|p|^2), \qquad \forall ~ t\ge T, \quad \forall~p\in \RR^3.\end{equation}
\end{theorem}

\begin{remark} The fact the Cauchy problem \eqref{QuantumBoltzmann}-\eqref{BEC} with the energy \eqref{def-E} has a unique classical positive radial solution $f(t,p) = f(t,|p|)\ge 0$ in $C([0,\infty),\mathbb{L}^1_{m} (\RR^3)\cap C(\RR^3))\cap C^1([0,\infty),\mathbb{L}^1_{m} (\RR^3)\cap C(\RR^3))$, $n_c\in C^1([0,\infty))$ and $n_c$ satisfies
$
n_0\le n_c(t),
$ for some strictly positive constant $n_0$ and for all $t\in[0,\infty)$, has been studied in the revised version of \cite{AlonsoGambaBinh}.
\end{remark}

\begin{remark} The fact that $n_c(t)\ge n_0>0$ for all $t\in[0,\infty)$ means that the density of the BEC, then the BEC itself, are stable and do not diminish as time evolves, if there is no external interference.
\end{remark}

\bigskip

Physically speaking, Theorem \ref{TheoremLowerBound} asserts that the collision operator $\mathcal{C}_{12}[f]$ prevents the excited atoms from falling completely into the condensate. In other words, given a condensate and its thermal cloud, we can prove that there will be some portion of excited atoms which remain outside of the condensate and the density of such atoms will be greater than a Gaussian distribution, uniformly in time $t\geq\tau$, for any time $\tau>0$.

The condition that initial data $f_0(p)$ has positive mass near $\{p=0\}$ is necessary for such a lower bound by a Gaussian to hold, since otherwise $f(t,0)$ would remain zero for all time, as a consequence of $\mathcal{C}_{12}[f](0) =0$, or 
\begin{equation}\label{ZeroMomentum}
\partial_t f(t,0)=0, \qquad \forall ~t\ge 0,
\end{equation}
which implies $$f(t,0)=f_0(0), \qquad \forall ~t\ge 0.$$

\bigskip

In order to prove Theorem \ref{TheoremLowerBound}, it is crucial that we derive bounds on the loss term in the collision operator, which then require bounds on the mass. Moreover, let us emphasize that unlike the classical Boltzmann equation, Equation \eqref{QuantumBoltzmann} does not conserve  the mass. However, the full system \eqref{QuantumBoltzmann}-\eqref{BEC} conserves the mass.

\bigskip

The attempts to quantify the strict positivity of the solution to the Boltzmann equation
are as classical as the mathematical theory of the Boltzmann equation and were first done by Carleman in the pioneering paper \cite{Carleman:1933:TEI}. In this paper, he proved that the solution is bounded from  below by $$\theta_1\exp(-\theta_2|p|^{2+\epsilon}), (\epsilon>0),$$ using a ``spreading
property'' of the collision operator. This result was improved by Pulvirenti and Wennberg \cite{PulvirentiWennberg:1997:MLB}, in which they proved, for hard potentials with cutoff in dimension 3, that the spatially homogeneous solutions in with bounded entropy  satisfy a Gaussian lower bound. The proof is  also based on the spreading property of
the collision operator; however, the optimal decay of the lower bound was obtained by some
improvements of the computations. In \cite{Mouhot:QLB:20015}, Mouhot  proved  an explicit lower bound on the solution to the full Boltzmann equation in the torus, under the assumption of some uniform bounds on some hydrodynamic quantities,  for a broad family of collision kernels including in particular long-range interaction models. The study of lower bounds is a very important subject, not only to understand the qualitative behaviour of solutions to the Boltzmann equation, but also to study the convergence to equilibrium using the so-called ``entropy-entropy production''method \cite{Mouhot:QLB:20015,MR1942465}. This method relies on a control from below uniformly in time on the solutions.

\bigskip

The structure of the paper is as follows:
\begin{itemize}
\item In Section \ref{sec-conv}, we establish the conservation of momentum, energy and the H-theorem of \eqref{QuantumBoltzmann}.  
\item In Section \ref{sec-energy}, we provide the technical estimates on the energy surfaces, which are the basic tools of the paper.

\item Section \ref{sec-moments} is devoted to  prove that the second order energy moment is created and propagated uniformly in time. 
\item Finally, Section 5 is devoted to the proof of the main theorem, Theorem \ref{TheoremLowerBound}. 
\end{itemize}

\section{Conservation laws and the H-theorem}\label{sec-conv}
In this section, we present a few basic properties of smooth solutions of \eqref{QuantumBoltzmann}.  

\begin{lemma}\label{Lemma:WeakFormulation}
For any smooth function $f(p)$, there holds 
\begin{eqnarray*}
\int_{\mathbb{R}^3}\mathcal{C}_{12}[f](p)\varphi(p)dp
=\iiint_{\mathbb{R}^3\times\mathbb{R}^3\times\mathbb{R}^3} \mathcal{R}_{p, p_1, p_2}[f] 
\Big( \varphi(p)-\varphi(p_1)-\varphi(p_2) \Big) \; dpdp_1dp_2
\end{eqnarray*}
for any smooth test function $\varphi$. 
\end{lemma}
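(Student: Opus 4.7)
The plan is to use Fubini and the invariance of the product domain $\RR^3 \times \RR^3 \times \RR^3$ under permutations of coordinates to rewrite the three pieces of $Q[f]$ with a common integrand $R(p,p_1,p_2)$. Starting from the definition \eqref{E1} and multiplying by $\varphi(p)$, I would first expand
\begin{equation*}
\int_{\RR^3} Q[f](p)\varphi(p)\,dp = \iiint \varphi(p)R(p,p_1,p_2)\,dp\,dp_1\,dp_2 - \iiint \varphi(p)R(p_1,p,p_2)\,dp\,dp_1\,dp_2 - \iiint \varphi(p)R(p_2,p_1,p)\,dp\,dp_1\,dp_2,
\end{equation*}
where each integral is over $\RR^3\times\RR^3\times\RR^3$.

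Next, in the second integral I would perform the measure-preserving change of variables that swaps the dummy variables $p$ and $p_1$; this sends the integrand $\varphi(p)R(p_1,p,p_2)$ to $\varphi(p_1)R(p,p_1,p_2)$ while leaving the domain unchanged. Analogously, in the third integral I would swap the dummy variables $p$ and $p_2$, turning $\varphi(p)R(p_2,p_1,p)$ into $\varphi(p_2)R(p,p_1,p_2)$. Combining the three integrals into a single triple integral with common integrand $R(p,p_1,p_2)$ then yields
\begin{equation*}
\int_{\RR^3} Q[f](p)\varphi(p)\,dp = \iiint R(p,p_1,p_2)\bigl(\varphi(p)-\varphi(p_1)-\varphi(p_2)\bigr)\,dp\,dp_1\,dp_2,
\end{equation*}
which is exactly the stated identity.

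This argument is essentially bookkeeping, so there is no serious obstacle; the only point that requires some attention is that $R(p,p_1,p_2)$ involves two Dirac masses, so the triple integrals must be read in the appropriate distributional sense (or, equivalently, reduced to surface integrals on the energy manifolds $\cE(p)=\cE(p_1)+\cE(p_2)$ and $p=p_1+p_2$). Provided $f$ and $\varphi$ are smooth enough that these surface integrals are absolutely convergent, Fubini applies and the relabelings above go through verbatim. The symmetry of $\mathcal{K}(p,p_1,p_2)$ in its last two arguments is not needed at this stage, though it can be noted as a consequence.
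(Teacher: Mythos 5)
Your proposal is correct and follows exactly the same argument as the paper: expand $Q[f]$ via its definition, then relabel the dummy variables $p \leftrightarrow p_1$ in the second integral and $p \leftrightarrow p_2$ in the third. The remark about reading the delta-function integrals in the distributional sense is a reasonable extra caution but does not change the substance.
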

\begin{proof}
By the definition \eqref{E1} of $\mathcal{C}_{12}[f]$, we have 
$$
\int_{\mathbb{R}^3}\mathcal{C}_{12}[f](p)\varphi(p)dp 
= \iiint_{\mathbb{R}^3\times\mathbb{R}^3\times\mathbb{R}^3}  \Big(\mathcal{R}_{p, p_1, p_2}[f]-\mathcal{R}_{p_1, p, p_2}[f]-\mathcal{R}_{p_2,p, p_1}[f] \Big ) \varphi(p) \; dp dp_1dp_2. 
$$
By switching the variables $p\leftrightarrow p_1$ and $p\leftrightarrow p_2$ in the second and third integral, respectively, the lemma follows.  
\end{proof}

As a consequence, we obtain the following two corollaries. 

\begin{corollary}[Conservation of momentum and energy]\label{Cor-ConservatioMomentum} Smooth solutions $f(t,p)$ of \eqref{QuantumBoltzmann}, with initial data $f(0,p) =f_0(p)$, satisfy 
\begin{eqnarray}\label{Coro:ConservatioMass}
\int_{\mathbb{R}^3}f(t,p)pdp&=&\int_{\mathbb{R}^3}f_0(p)pdp\\\label{Coro:ConservatioMomentum}
\int_{\mathbb{R}^3}f(t,p)\mathcal{E}(p)dp&=&\int_{\mathbb{R}^3}f_0(p)\mathcal{E}(p)dp 
\end{eqnarray}
for all $t\ge 0$. 
\end{corollary}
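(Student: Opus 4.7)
The plan is to apply the weak formulation from Lemma \ref{Lemma:WeakFormulation} with suitable choices of test function $\varphi$ and use the collisional invariants encoded in the delta functions of the kernel $\mathcal K$. The key observation is that, due to the Dirac factors $\delta(p-p_1-p_2)$ and $\delta(\mathcal E(p)-\mathcal E(p_1)-\mathcal E(p_2))$ appearing in $\mathcal K(p,p_1,p_2)$, the function $R(p,p_1,p_2)$ is supported precisely on the set where $p = p_1+p_2$ and $\mathcal E(p) = \mathcal E(p_1)+\mathcal E(p_2)$.

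First I would prove \eqref{Coro:ConservatioMass}. Pick $\varphi(p)=p_i$, the $i$-th coordinate function, for $i=1,2,3$. Lemma \ref{Lemma:WeakFormulation} yields
\begin{equation*}
\int_{\mathbb{R}^3} Q[f](p)\, p_i\, dp = \iiint_{\mathbb{R}^9} R(p,p_1,p_2)\bigl(p_i-(p_1)_i-(p_2)_i\bigr)\, dp\, dp_1\, dp_2.
\end{equation*}
Since $R$ is supported on $\{p=p_1+p_2\}$, the integrand vanishes identically, so $\int Q[f](p)\, p_i\, dp=0$. Integrating \eqref{QuantumBoltzmann} against $p_i$ and exchanging time derivative with the spatial integral then gives $\frac{d}{dt}\int f(t,p)\,p\,dp=0$, which integrated in time produces \eqref{Coro:ConservatioMass}. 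The argument for \eqref{Coro:ConservatioMomentum} is parallel: take $\varphi(p)=\mathcal E(p)$, use the energy delta $\delta(\mathcal E(p)-\mathcal E(p_1)-\mathcal E(p_2))$ to conclude $\varphi(p)-\varphi(p_1)-\varphi(p_2)=0$ on the support of $R$, and the weak formulation yields $\int Q[f](p)\mathcal E(p)\,dp=0$.

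The main technical point to address is the justification of exchanging the time derivative with the $p$-integral, and of applying Lemma \ref{Lemma:WeakFormulation} to the unbounded test functions $\varphi(p)=p_i$ and $\varphi(p)=\mathcal E(p)$, which are not in the smooth compactly supported class used to state the lemma. For this I would argue by approximation: pick a smooth cutoff $\chi_R(p)$ with $\chi_R \equiv 1$ on $B_R$ and supported on $B_{2R}$, apply the lemma to $\varphi_R = \varphi\,\chi_R$, and send $R\to\infty$. The limit is controlled by the hypothesis $f_0\in L^1_{m_0}$ with $m_0\ge 1$ together with the $C^1(\mathbb R_+;L^1_{m_0})$ regularity of the solution assumed in Theorem \ref{TheoremLowerBound}: these ensure absolute integrability of $f(t,p)|p|$ and $f(t,p)\mathcal E(p)$, as well as of the cubic integrand defining $\int Q[f]\varphi_R\,dp$ on the bounded energy manifolds. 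The integrand $R(p,p_1,p_2)(\varphi_R(p)-\varphi_R(p_1)-\varphi_R(p_2))$ converges pointwise to $0$ on the support of $R$, and is dominated uniformly in $R$ by an integrable function built from the mass and energy moments, so dominated convergence closes the argument.

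Once $\frac{d}{dt}\int f(t,p)\,p\,dp=0$ and $\frac{d}{dt}\int f(t,p)\mathcal E(p)\,dp=0$ are established, integration in time from $0$ to $t$ gives the two identities of the corollary. I expect the only real obstacle to be the domination step in the cutoff approximation, since the surface-integral structure of $\mathcal K$ requires care; but since the test-function differences vanish identically on the support of $R$ for both choices of $\varphi$, one does not need to estimate individual terms, only to bound the integrand locally, which is straightforward from the $L^1_{m_0}$ bound.
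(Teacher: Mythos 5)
Your proposal is correct and follows the same route as the paper: apply Lemma~\ref{Lemma:WeakFormulation} with $\varphi(p)=p_i$ or $\varphi(p)=\mathcal{E}(p)$ and observe that $\varphi(p)-\varphi(p_1)-\varphi(p_2)$ vanishes on the support of $R(p,p_1,p_2)$ because of the Dirac constraints in $\mathcal K$. The paper's proof is a one-liner; your extra discussion of the cutoff approximation and the exchange of $\partial_t$ with the integral is a reasonable elaboration but does not change the substance of the argument.
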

\begin{proof} This follows from Lemma \ref{Lemma:WeakFormulation} by taking $\varphi(p) = p$ or $\mathcal{E}(p)$.
 \end{proof}

\begin{corollary}[H-Theorem] Smooth solutions $f(t,p)$ of \eqref{QuantumBoltzmann} satisfy 
 $$\frac{d}{dt}\int_{\mathbb{R}^3}\Big[f\log f-(1+f)\log(1+f)\Big]dp\leq 0.$$
In addition, radially symmetric equilibria  of \eqref{QuantumBoltzmann} must have the following form\begin{equation}\label{def-equilibrium}
f(p)=\frac{1}{e^{c\mathcal{E}(p)}-1},\end{equation}
for some positive constant $c$. 
\end{corollary}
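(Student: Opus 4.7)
The plan is to derive both conclusions from the weak formulation in Lemma 2.1 applied to the test function
\[
\varphi(p) = \log \frac{f(t,p)}{1+f(t,p)}.
\]

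\textbf{Step 1: Differentiating the entropy.} First I would compute, formally, the derivative of the entropy functional
\[
H(t) := \int_{\mathbb{R}^3} \bigl[f \log f - (1+f)\log(1+f)\bigr]\, dp.
\]
Differentiating under the integral and using that $\partial_f[f\log f-(1+f)\log(1+f)] = \log f - \log(1+f)$, we obtain $H'(t) = \int_{\mathbb{R}^3} \varphi(p)\, Q[f](p)\, dp$ with $\varphi$ as above.

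\textbf{Step 2: Symmetrize using Lemma 2.1.} Invoking Lemma \ref{Lemma:WeakFormulation} with this $\varphi$, I would rewrite $H'(t)$ as a triple integral of $R(p,p_1,p_2)\bigl(\varphi(p) - \varphi(p_1) - \varphi(p_2)\bigr)$. Setting $A = f(p)(1+f(p_1))(1+f(p_2))$ and $B = f(p_1)f(p_2)(1+f(p))$, a direct computation gives
\[
R(p,p_1,p_2) = \mathcal{K}(p,p_1,p_2)(B-A), \qquad \varphi(p)-\varphi(p_1)-\varphi(p_2) = \log\frac{A}{B}.
\]
Therefore the integrand equals $-\mathcal{K}(p,p_1,p_2)(A-B)\log(A/B) \leq 0$, since $\mathcal{K} \geq 0$ and $(x-y)\log(x/y)\geq 0$ for positive $x,y$. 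This yields $H'(t) \leq 0$.

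\textbf{Step 3: Characterizing radial equilibria.} For an equilibrium $f$, entropy dissipation vanishes, so the pointwise integrand above must be zero wherever $\mathcal{K} > 0$; that is, $A = B$ on the collision manifold $\{p=p_1+p_2,\ \mathcal{E}(p)=\mathcal{E}(p_1)+\mathcal{E}(p_2)\}$. Taking logarithms gives the additivity relation
\[
\varphi(p) = \varphi(p_1) + \varphi(p_2) \quad \text{whenever } p=p_1+p_2 \text{ and } \mathcal{E}(p)=\mathcal{E}(p_1)+\mathcal{E}(p_2).
\]
Thus $\varphi$ is a collision invariant. The general collision invariant must be an affine combination of the conserved quantities $p$ (momentum) and $\mathcal{E}(p)$ (energy); under the radial symmetry assumption $f(p) = f(|p|)$, $\varphi$ is also radial, which eliminates the $p$-component, leaving $\varphi(p) = -c\,\mathcal{E}(p) + c_0$ for constants $c,c_0$. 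Inverting $\varphi = \log(f/(1+f))$ and requiring $f\geq 0$ and integrability forces $c_0=0$ and $c>0$, giving
\[
f(p) = \frac{1}{e^{c\mathcal{E}(p)}-1}.
\]

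\textbf{Main obstacle.} The only nontrivial point is justifying that every additive-on-the-collision-manifold function is of the claimed affine form; for the quadratic classical energy this is standard, but here $\mathcal{E}$ is the Bogoliubov law \eqref{def-E}, so one needs to check that the resonance manifold is rich enough to force this form. A differentiation argument at interior points of the manifold (using that momentum ranges over an open set for each fixed energy split) reduces the problem to the classical statement that any measurable solution of the Cauchy equation restricted to the conservation surface is linear in the conserved quantities; the radiality then collapses this to a single parameter $c$.
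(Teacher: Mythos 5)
Your Steps 1 and 2 follow the paper exactly: differentiate the entropy to obtain $H'(t)=\int_{\RR^3} Q[f]\log\frac{f}{1+f}\,dp$, apply the weak formulation (Lemma \ref{Lemma:WeakFormulation}) to symmetrize, factor the integrand as $-\mathcal{K}(A-B)\log(A/B)$, and conclude $H'\le 0$ from $(a-b)\log(a/b)\ge 0$.

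Your Step 3 reaches the right answer but one clause is wrong and the route differs from the paper's. You claim ``integrability forces $c_0=0$.'' This is false: for $c>0$ and any $c_0<0$, the function $f(p)=(e^{c\mathcal{E}(p)-c_0}-1)^{-1}$ is nonnegative, bounded, and integrable, so integrability cannot rule out $c_0\ne 0$. What eliminates it is the additivity relation itself: the identity $\varphi(p_1+p_2)=\varphi(p_1)+\varphi(p_2)$ is a $1\to2$ relation, not a $2\to2$ sum-preservation, and it is incompatible with a nonzero constant term (letting $p_1\to 0$ along the resonance surface forces $\varphi(0)=0$, hence $c_0=0$). Relatedly, invoking the classical collision-invariant theorem is a slight mismatch: that theorem concerns $2\to2$ invariance, where constants \emph{are} invariants, and it has not been established for the Bogoliubov dispersion. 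The paper avoids this detour by composing the radial representative $h(|p|)=\varphi(p)$ with $\mathcal{E}^{-1}$: the resonance relation becomes the unrestricted Cauchy equation $g(a+b)=g(a)+g(b)$ on $\RR_+$ for $g=h\circ\mathcal{E}^{-1}$, whose only measurable solutions are $g(a)=-ca$. The fact that all $a,b>0$ are realized on the resonance manifold is exactly your ``richness'' concern; it follows from the description of $S'_p$ in Lemma \ref{lem-Sp1}. So you identify the right obstacle, but the direct $\mathcal{E}^{-1}$ substitution both resolves it and disposes of the constant automatically, which is why the paper uses it.
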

\begin{proof}
First notice that
$$\begin{aligned}
\frac{d}{dt}\int_{\mathbb{R}^3}\left[f\log f-(1+f)\log(1+f)\right]dp
&=\int_{\mathbb{R}^3}\partial_t f\log\left(\frac{f}{f+1}\right)dp.\end{aligned}$$
On the other hand, we write 
$$
\begin{aligned}
\int_{\mathbb{R}^3}\mathcal{C}_{12}[f](p)\varphi(p)dp 
&=\iiint_{\mathbb{R}^3\times\mathbb{R}^3\times\mathbb{R}^3}
\mathcal{K}(p, p_1, p_2) 
(1+f)(1+f_1)(1+f_2)
\\&\quad \times \Big( \frac{f_1}{1+f_1}\frac{f_2}{1+f_2}-\frac{f}{1+f} \Big )  [\varphi(p)-\varphi(p_1)-\varphi(p_2)]dpdp_1dp_2.
\end{aligned}$$
Using Lemma \ref{Lemma:WeakFormulation} with $\varphi(p)=\log\left(\frac{f(p)}{f(p)+1}\right)$ and the fact that $(a-b)\log (\frac ab) \ge 0$, with equality if and only if $a = b$, we obtain 
$$\int_{\mathbb{R}^3}\mathcal{C}_{12}[f](p)\log\left(\frac{f(p)}{f(p)+1}\right) dp\leq 0.$$
This yields the claimed inequality in the H-theorem. In the case of equality, we have 
$$\frac{f(p_1)}{f(p_1)+1}\frac{f(p_2)}{f(p_2)+1}-\frac{f(p)}{f(p)+1}=0,$$
or equivalently, setting   
$h(p)=\log \left(\frac{f(p)}{f(p)+1}\right)$, where $h$ is radially symmetric,
\begin{equation}\label{id-h}h(p_1)+h(p_2)=h(p)\end{equation}
for all $(p, p_1, p_2)$ so that $\mathcal{K}(p, p_1, p_2) \not =0$. In particular, by view of the conservation laws \eqref{cv}, the function $h(p)$ satisfies $h(p_1 + p_2) = h(p_1) + h(p_2)$, for all pairs $(p_1, p_2) \in \mathbb{R}^6$ so that 
$$ \mathcal{E} (p_1 + p_2) = \mathcal{E} (p_1) + \mathcal{E} (p_2) .$$

Define $\mathcal{E}^{-1} (a)$ to be the positive number $\xi$ such that
$$\sqrt{\xi^2+\xi^4}=a.$$

We then have  that $$h \circ \mathcal{E}^{-1} (a + b) = h \circ \mathcal{E}^{-1} (a) + h \circ \mathcal{E}^{-1} (b),$$ for all $p_1$ and $p_2$ such that  $|p_1| = \mathcal{E}^{-1} (a)$ and $|p_2|= \mathcal{E}^{-1} (b)$, with the notice that $h$ is radially symmetric. Since $a,b$ may take arbitrary values in $\mathbb{R}$, this yields $h \circ \mathcal{E}^{-1} (a) =  - c a$ for some positive constant $c$ and for all $a\ge 0$, or equivalently $$h(p) =  - c \mathcal{E}(p),$$ for all $p \in \mathbb{R}^3$. This yields \eqref{def-equilibrium} and hence the H-theorem.\end{proof}

\section{Energy surfaces}\label{sec-energy}


In this section, we study the surface integrals that arise in the collision operator, due to the conservation laws \eqref{cv}. Recall the collision kernel 
$$\mathcal{K}(p, p_1, p_2)= |p|^\rho|p_1|^\rho |p_2|^\rho\delta (p-p_1-p_2) \delta (\mathcal{E} (p)-\mathcal{E} (p_1)-\mathcal{E} (p_2)) $$
with $\delta(\cdot)$ being the Dirac delta function. Thus, the volume element $\mathcal{K}(p, p_1, p_2) dp_1 dp_2$ or $\mathcal{K}(p_1, p, p_2) dp_1 dp_2$ in $\RR^6$ is in fact a two-dimensional surface element. Precisely, introduce the functions
 \begin{equation}\label{def-Hp}\begin{aligned}
H_p(w): = \mathcal{E}(w-p) + \mathcal{E}(w) - \mathcal{E}(p), \ \ \ G_p(w): =\mathcal{E}(p+w) - \mathcal{E}(w) - \mathcal{E}(p) ,
\end{aligned}\end{equation}
with $\cE(w) = |w|\sqrt{1+|w|^2}$, and  the corresponding energy surfaces, dictated by the conservation laws \eqref{cv}, 
\begin{equation}\label{def-Sp}\begin{aligned}
S_p: = \Big \{ w\in \mathbb{R}^d~:~H_p(w)=0 \Big\}
, \qquad S'_p : = \Big \{w\in \mathbb{R}^d~:~ G_p(w)= 0 \Big\}.
\end{aligned}\end{equation}
It follows that the collision operators satisfy 
\begin{equation}\label{intro-Surface123} 
\begin{aligned}
\iint _{ \RR^3 \times \RR^3} \mathcal{R}_{p,p_1,p_2}[f] \; dp_1 dp_2&=  \int_{S_p} \mathcal{R}_{p,p-p_2,p_2}[f] \frac{d\sigma(p_2)}{|\nabla H_p(p_2)|}
\\
\iint _{ \RR^3 \times \RR^3} \mathcal{R}_{p_1,p,p_2}[f] \; dp_1 dp_2 &=  \int_{S_p'} \mathcal{R}_{p+p_2,p,p_2}[f] \frac{d\sigma(p_2)}{|\nabla G_p(p_2)|}
.\end{aligned}\end{equation}
The next two lemmas provide estimates on these surface integrals.

\begin{lemma}\label{lem-Sp} Let $S_p$ be defined as in \eqref{def-Sp}. There are positive constants $c_0, C_0$ so that 
\begin{equation}\label{area-Sp} c_0 |p| \min\{ 1, |p|\}\le \int_{S_p} \frac{d\sigma(w)}{|\nabla H_p(w)|} \le C_0 |p| \min\{ 1, |p|\}  ,\end{equation}
and for $\gamma\ge 0$, 
\begin{equation}\label{lem-Sp-e2}  \int_{S_p \cap B(0,\frac12|p|)} |w-p|^\gamma |w|^\gamma \frac{d\sigma(w)}{|\nabla H_p(w)|}  \ge c_0 |p|^{2\gamma +1}\;  \min\{ 1, |p|\},\end{equation}
uniformly in $p\in \mathbb{R}^3$. 
In addition, for any function $F(\cdot)$, we have 
\begin{equation}\label{lem-Sp-e1}
 \int_{S_p} F(|w|) \frac{d\sigma(w)}{|\nabla H_p(w)|}  \le C_0 \int_0^{|p|} \min\{1,u\} F(u)\; du.\end{equation}
\end{lemma}
\begin{proof} Recall that $S_p$ is the surface consisting of $w$ so that $H_p(w) =0$ or $$\mathcal{E}(w-p) + \mathcal{E}(w)= \mathcal{E}(p)$$ 
with $\cE(w) = |w| \sqrt{1+|w|^2}$. It is clear that $S_p$ is symmetric about $\frac p2$. We will prove that the surface $S_p$ is of the form as illustrated in Figure \ref{fig-Sp}. 
First, we note that $\{0,p\}\subset S_p$, and $|w| \le |p|$ and $|w-p|\le |p|$, for all $w\in S_p$, since $\mathcal{E}(w-p)\le \mathcal{E}(p)$, $ \mathcal{E}(w) \le \mathcal{E}(p)$, and $\mathcal{E}(p)$ is a nonnegative increasing function.

\begin{figure}[t]
\centering
\includegraphics[scale=.45]{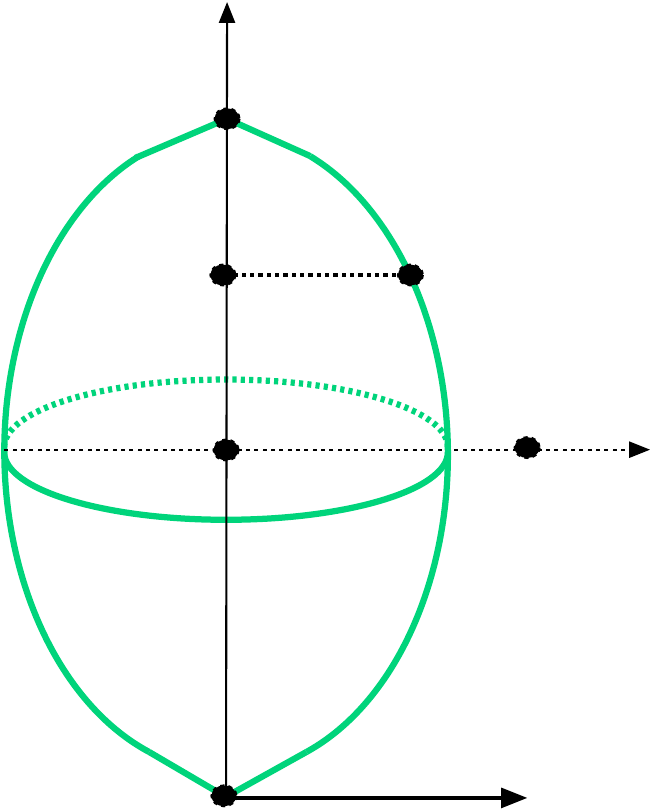}
\put(-95,-10){$0$}
\put(-87,155){$p$}
\put(-25,85){$\frac{|p|}2$}
\put(-30,10){$q$}
\put(-87,85){$\frac{p}2$}
\put(-110,117){$\alpha p$}
\put(-50,120){$w_\alpha$}
\caption{\em Illustrated is the oval surface $S_p$, centered at $\frac p2$ and having $0$ and $p$ as its south and north poles, respectively.}
\label{fig-Sp}
\end{figure}

For $w\in S_p$, we write $w=\alpha p+q$, with $p\cdot q =0$. Since $|w| \le |p|$ and $|w-p|\le |p|$, $\alpha \in [0,1]$. In addition, recalling \eqref{def-Hp}, we compute 
\begin{equation}\label{DG} \nabla_w H_p = (1+2|w-p|^2)\frac{w-p}{\cE(w-p)} + (1+2|w|^2)\frac{w}{\cE(w)}.\end{equation}
Thus, $q\cdot \nabla_w H_p >0$. That is, $H_p(w)$ is strictly increasing in any direction that is orthogonal to $p$. This, together with the fact that $H_p(\alpha p) <0$ for $\alpha \in (0,1)$ and $S_p \subset \overline{B(0,|p|)} \cap \overline{B(p,|p|)}$, proves that the surface $S_p$ and the  plane $$\mathcal{P}_\alpha= \Big\{ \alpha p + q, \quad p\cdot q = 0\Big\}$$ 
intersect for each $\alpha \in [0,1]$. In addition, $H_p(\alpha p + q)$ is a radial function in $|q|$, with $q\cdot p=0$. This asserts that the intersection of $S_p$ and $\mathcal{P}_\alpha$ is precisely the circle centered at $\alpha p$ and of a finite radius $|q_\alpha|$, for each $\alpha \in [0,1]$; see Figure \ref{fig-Sp}.

~\\
{\bf Surface parametrization.} Let $p^\perp$ be in $\mathcal{P}_0 = \{ p\cdot q =0\}$ and let $e_\theta$ be the unit vector in $ \mathcal{P}_0$ so that the angle between $p^\perp$ and $e_\theta$ is $\theta$. We parametrize $S_p$ by 
\begin{equation}\label{metric-Sp} S_p = \Big\{ w (\alpha,\theta) = \alpha p + |q_\alpha| e_\theta ~:~ \theta \in [0,2\pi], ~\alpha \in [0,1]\Big\} .\end{equation}
Since $\partial_\theta e_\theta$ is orthogonal to both $p$ and $e_\theta$,  we compute the surface area 
\begin{equation}\label{dS}\begin{aligned}
 d \sigma (w) &= |\partial_\alpha w \times \partial_\theta w | d\alpha d\theta  =  |(p+\partial_\alpha |q_\alpha| e_\theta) \times |q_\alpha| \partial_\theta e_\theta  | d\alpha d\theta
 \\
&= |q_\alpha| |(p+\partial_\alpha |q_\alpha| e_\theta) \times  \partial_\theta e_\theta  | d\alpha d\theta
\\
 &=
 |q_\alpha|\sqrt{|p|^2 +| \partial_\alpha |q_\alpha| |^2}d\alpha d\theta .
 \end{aligned}\end{equation}
To compute $\partial_\alpha |q_\alpha|$, we differentiate the equation $H_p(w_\alpha) = 0$, yielding 
\begin{equation}\label{dG-qa0}
\begin{aligned}
 0 &= \partial_\alpha w_\alpha \cdot \nabla_w H_p(w_\alpha)= |p| e_p  \cdot \nabla_w H_p(w_\alpha) + \partial_\alpha |q_\alpha| e_\theta  \cdot \nabla_w H_p(w_\alpha).
 \end{aligned}\end{equation}
This implies that 
\begin{equation}\label{comp-Dqa} \partial_\alpha |q_\alpha|  = - |p| \frac{e_p  \cdot \nabla_w H_p(w_\alpha)}{ e_\theta  \cdot \nabla_w H_p(w_\alpha)}.\end{equation}
Therefore, we compute 
$$ |p|^2 + |\partial_\alpha |q_\alpha||^2 = |p|^2 \frac{|e_p  \cdot \nabla_w H_p|^2+ |e_\theta  \cdot \nabla_w H_p|^2}{|e_\theta  \cdot \nabla_w H_p|^2} =  |p|^2 \frac{|\nabla_w H_p|^2}{|e_\theta  \cdot \nabla_w H_p|^2},$$
and hence 
\begin{equation}\label{dS-Hp0}
\frac{d\sigma(w)}{|\nabla_wH_p|} = \frac{|p| |q_\alpha| d\alpha d\theta}{|e_\theta \cdot \nabla_w H_p|} .
\end{equation}

~\\
{\bf Surface area.} A direct computation yields 
\begin{equation}\label{comp-DHD}
\begin{aligned}
e_\theta \cdot\nabla_w H_p &= |q_\alpha| \Big[ \frac{1+2|w-p|^2}{\cE(w-p)} +\frac{1+2|w|^2}{\cE(w)}\Big]  .
\end{aligned}\end{equation}
Recalling that $|w|\le |p|$ and $\cE(w) = |w|\sqrt{1+|w|^2}$, and using the fact that $(1+2|p|^2)/\cE(p)$ is decreasing in $|p|$, we compute 
$$\frac{1+2|w-p|^2}{\cE(w-p)} +\frac{1+2|w|^2}{\cE(w)} \ge \frac{1+2|p|^2}{\cE(p)} \ge \min\{1,|p|\}^{-1}. $$
This, \eqref{dS-Hp0}, and \eqref{comp-DHD} prove the upper bound on the surface area \eqref{area-Sp}. As for the lower bound, it suffices to give an estimate for $\alpha\in [0,1/2]$, on which $\alpha |p|\le |w|\le |w-p|$. Thus, in this case, we have 
$$\frac{1+2|w-p|^2}{\cE(w-p)} +\frac{1+2|w|^2}{\cE(w)} \le 2 \frac{1+2|\alpha p|^2}{\cE(\alpha p)} \le C_0 \min\{ 1, \alpha |p|\}^{-1}. $$
The lower bound on the surface area \eqref{area-Sp} follows. 

~\\
{\bf Surface area in $B(0,\frac12|p|)$.} In view of \eqref{comp-Dqa}, \eqref{comp-DHD}, and the identity 
\begin{equation}\label{DHD1}e_p \cdot \nabla_w H_p  =  |p| \Big[ (\alpha-1) \frac{1+2|w-p|^2}{\cE(w-p)} +\alpha \frac{1+2|w|^2}{\cE(w)}\Big] ,\end{equation}
we have $|\partial_\alpha |q_\alpha|| \le |p|^2 |q_\alpha|^{-1}$, which implies
$$|\partial_\alpha |q_\alpha|^2|\le 2|p|^2.$$

Since 
$$|w_\alpha|^2=\alpha^2|p|^2+|q_\alpha|^2,$$
then
$$\partial_\alpha|w_\alpha|^2=2\alpha|p|^2+\partial_\alpha|q_\alpha|^2.$$
Upon recalling that $\alpha \in [0,1]$, $|\partial_\alpha|w_\alpha|^2|\le 4 |p|^2$, and 
$$ |w_\alpha|^2 = \int_0^\alpha \partial_\alpha |w_\alpha|^2 \; d\alpha' \le 4 \alpha |p|^2,$$
which proves that $w_\alpha \in  B(0,\frac12|p|)$ for all $\alpha \in [0,\frac{1}{16}]$. The lower bound \eqref{lem-Sp-e2} follows.

~\\
{\bf Surface integral.} Let us introduce the radial variable $ u = |w_\alpha | = \sqrt{\alpha^2 |p|^2 + |q_\alpha|^2}$. We compute $2u du =\partial_\alpha |w_\alpha|^2 d\alpha$. Hence, \eqref{dS-Hp0} yields   
\begin{equation}\label{dS-Hprad0}
\frac{d\sigma(w)}{|\nabla_wH_p|} = \frac{|p| |q_\alpha|}{|e_\theta \cdot \nabla_w H_p|} \frac{2u du d\theta}{\partial_\alpha |w_\alpha|^2 }.
\end{equation}
In view of \eqref{comp-Dqa}, we compute 
$$ \partial_\alpha |w_\alpha|^2 = 2\alpha |p|^2 + 2 |q_\alpha|\partial_\alpha |q_\alpha| = 2|p|  \frac{\alpha |p| e_\theta  \cdot \nabla_w H_p - |q_\alpha| e_p  \cdot \nabla_w H_p}{ e_\theta  \cdot \nabla_w H_p}$$
in which, using \eqref{comp-DHD} and \eqref{DHD1}, we compute  
$$\alpha |p| e_\theta  \cdot \nabla_w H_p - |q_\alpha| e_p  \cdot \nabla_w H_p =  |p| |q_\alpha| \frac{1+2|w-p|^2}{\cE(w-p)} .$$
Combining, we obtain 
\begin{equation}\label{dS-Hprad}
\frac{d\sigma(w)}{|\nabla_wH_p|} = \frac{\cE(w-p) u du d\theta}{|p|(1+2|w-p|^2)} \le C_0 \min\{1, u\} du d\theta\end{equation}
upon recalling that $|w|\le |p|, |w-p|\le |p|$ for $w\in S_p$ and $\cE(w) = |w|\sqrt{1+|w|^2}$. This proves \eqref{lem-Sp-e1}.  
\end{proof}

\begin{lemma}\label{lem-Sp1} Let $S_p'$ be defined as in \eqref{def-Sp}. 
There are positive constants $c_0,C_0$ so that for any $F(\cdot)$, 
\begin{equation}\label{bd-Sp123} \int_{S_p'} F(|w|) \frac{d\sigma(w)}{|\nabla G_p(w)|}  \le C_0 |p|^{-1}\int_0^\infty  F(u)\; udu,\end{equation}
and 
\begin{equation}\label{low-Sp123}\int_{S_{p}'} F(|w|) \frac{d\sigma(w)}{|\nabla G_p(w)|}  \ge  c_0 \min\{ 1,|p|^{-1}\} \int_0^\infty F(u)\; udu,\end{equation}
for all $p\in \RR^3$. 
\end{lemma}

\begin{proof} Recall that $S_p'$ is the surface that consists of $w$ satisfying $\mathcal{E}(p+w)  =  \mathcal{E}(w) + \mathcal{E}(p)$.  
First, we compute \begin{equation}\label{wp-com} \begin{aligned}
0&=\mathcal{E}(p+w)^2 - \Big( \mathcal{E}(p) + \mathcal{E}(w)\Big)^2  
\\&=    |p+w|^2 +  |p+w |^4  -  (|p|^2 + |w|^2) 
-  (|p|^4 + |w|^4) - 2\mathcal{E}(p) \mathcal{E}(w)
\\&=  2 w\cdot p  +2  w\cdot p (|p|^2 + |w|^2 + |p+w|^2) 
 + 2   |p|^2 |w|^2 - 2\mathcal{E}(p) \mathcal{E}(w).
\end{aligned}\end{equation}
It is clear that $   |p|^2 |w|^2  < \mathcal{E}(p) \mathcal{E}(w)$. This proves that if $w \in S_p' \setminus  \{ 0\}$, then $w\cdot p >0$. 

Next, recall $G_p(w): = \mathcal{E}(p+w) -\mathcal{E}(w) - \mathcal{E}(p) $, with $\cE(p) = |p|\sqrt{1+|p|^2}$. It follows that $G_p(\alpha p) >0$ for $\alpha>0$. In addition, 
we compute   
$$ \nabla_w G_p = \frac{w+p}{|w+p|}\mathcal{E}'(w+p) - \frac{w}{|w|}\cE'(w)$$
and thus the directional derivative of $G_p$ at $w_\alpha = \alpha p+q$, with $p\cdot q =0$, in the direction of $q\not= 0$ satisfies 
$$ q \cdot \nabla_w G_p = |q|^2 \Big[ \frac{\mathcal{E}'(p+w_\alpha)}{|p+w_\alpha|} - \frac{\mathcal{E}'(w_\alpha) }{|w_\alpha|}\Big]  < 0$$
in which we used the fact that $\mathcal{E}'(p)/|p|$ is strictly decreasing in $|p|$. By a view of \eqref{wp-com}, the sign of $G_p(w)$, with $w_\alpha = \alpha p + q$, is the same as that of  
$$
\begin{aligned}
\alpha |p|^2 & \Big( 1 +  (|p|^2 + |w_\alpha|^2 + |p+w_\alpha|^2) \Big) +
    |p|^2 |w_\alpha|^2  - \mathcal{E}(p) \mathcal{E}(w) 
\\ &= \alpha |p|^2  \Big( 1 + 2 (|p|^2 + \alpha |p|^2+ |w_\alpha|^2 ) \Big) 
 -  \frac{ ( |p|^2 +  |p|^4) ( |w_\alpha|^2 +  |w_\alpha|^4) - |p|^4 |w_\alpha|^4}
 {\sqrt{ |p|^2 +  |p|^4}\sqrt{ |w_\alpha|^2 +  |w_\alpha|^4}  +   |p|^2 |w_\alpha|^2 }
 \\ &= \alpha |p|^2  \Big( 1 +  2 (|p|^2 + \alpha |p|^2+ |w_\alpha|^2 ) \Big) 
 -  \frac{  |w_\alpha|^2  |p|^2 +  |w_\alpha|^2|p|^4  +   |p|^2 |w_\alpha|^4 }
 {\sqrt{ |p|^2 +  |p|^4}\sqrt{ |w_\alpha|^2 +  |w_\alpha|^4}  +   |p|^2 |w_\alpha|^2 }.
 \end{aligned}$$
This yields that $G_p(\alpha p + q)<0$ as long as 
$$
\begin{aligned}
 \alpha  & <  \frac{  (1 +  |p|^2)  +   |w_\alpha|^2}
 {\sqrt{ |p|^2 +  |p|^4}\sqrt{\frac{1}{|w_\alpha|^2}+ 1}  +   |p|^2 } \frac{1}{ \Big( 1 +  2 (|p|^2 + \alpha |p|^2+ |w_\alpha|^2 ) \Big)}.
 \end{aligned}$$
Taking $|q| \to \infty$ (and so $|w_\alpha|\to \infty$), we obtain that $\lim_{q\to \infty}G_p(\alpha p + q) <0$ if and only if    
\begin{equation}\label{range-a}\alpha < \alpha_p:= 
\frac12 \frac{ 1}
 { |p|^2 + \sqrt{ |p|^2 +  |p|^4} }.\end{equation}
 In particular, we note that 
\begin{equation}\label{bd-ap}\alpha_p|p| (1+|p|) \le C_0, \qquad \forall ~ p\in \RR^3\end{equation}
for some positive constant $C_0$.
Hence, for positive values of $\alpha$ satisfying \eqref{range-a}, by monotonicity, $G_p(\alpha p) >0$, and the fact that $G_p(\alpha p + q)$ is radial in $|q|$, there is a unique $|q_\alpha|$ so that $G_p(\alpha p + q) =0$, for all $|q| = |q_\alpha|$. For $\alpha> \alpha_p$, $G_p(\alpha p + q)>0$, for all $q$, with $q\cdot p =0$.

\begin{figure}[t]
\centering
\includegraphics[scale=.45]{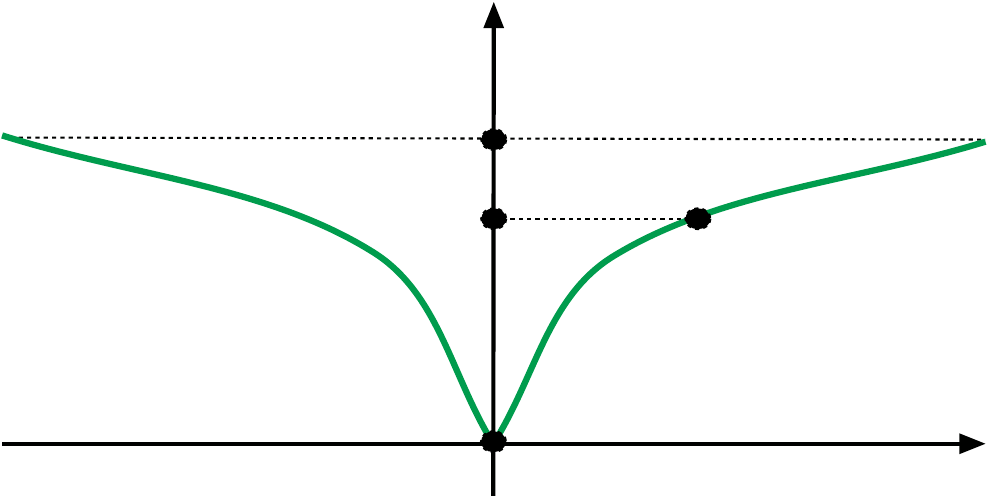}
\put(-105,-2){$0$}
\put(-100,105){$p$}
\put(-8,17){$q$}
\put(-100,82){$\alpha_pp$}
\put(-105,50){$\alpha p$}
\put(-57,55){$w_\alpha$}
\caption{\em Sketched is the trace of $S'_p$ on any two dimensional plane containing $p$.}
\label{fig-Sp1}
\end{figure}

~\\
{\bf Surface parametrization.} To summarize, the surface $S'_p$ can be described as follows (see Figure \ref{fig-Sp1}):
\begin{equation}S'_p =\Big \{ w(\alpha,\theta) = \alpha p + |q_\alpha| e_\theta~:~\alpha \in [0, \alpha_p),~ \theta \in [0,2\pi]\Big\},\end{equation}
in which $\alpha_p$ and $|q_\alpha|$ are defined as above and $e_\theta$ denotes the unit vector rotating around $p$ and on the orthogonal plane to $p$. 

~\\
{\bf Surface integral.} Recalling \eqref{dS-Hprad0}, the surface integral is computed by 
\begin{equation}\label{dS-Hprad01}
\frac{d\sigma(w)}{|\nabla_wG_p|} = \frac{|p| |q_\alpha|}{|e_\theta \cdot \nabla_w G_p|} \frac{2u du d\theta}{\partial_\alpha |w_\alpha|^2 },
\end{equation}
with $u = |w_\alpha|$, where, as done in the previous case, we compute 
$$\frac{1}{2|p|}e_\theta  \cdot \nabla_w G_p \partial_\alpha |w_\alpha|^2 = \alpha |p| e_\theta  \cdot \nabla_w G_p - |q_\alpha| e_p  \cdot \nabla_w G_p = - |p| |q_\alpha| \frac{1+2|w+p|^2}{\cE(w+p)} .
$$
Combining, we obtain 
\begin{equation}\label{dS-Hprad11}
\frac{d\sigma(w)}{|\nabla_wG_p|} = \frac{\cE(w+p) u du d\theta}{|p|(1+2|w+p|^2)} .\end{equation}
Recalling $\cE(w) = |w|\sqrt{1+|w|^2}$, we have 
$$\frac{\cE(w+p)|w|}{|p|(1+2|w+p|^2)} \le |w||p|^{-1}.$$
On the other hand, by considering $|p|\le 1$ and $|p|\ge 1$ and using the fact that $|w|+|p|\le 2|w+p|$ (on $S_p'$), we have 
$$\frac{\cE(w+p)|w|}{|p|(1+2|w+p|^2)} \ge c_0 |w| \min\{ 1,|p|^{-1}\}.$$
This yields the upper and lower bounds on the surface integral. \end{proof}

\section{Moment estimates}\label{sec-moments}

In this section, we shall derive estimates on the energy moment and on the mass of nonnegative solutions of \eqref{QuantumBoltzmann}. In order to obtain the boundedness of the gain and loss terms, which is crucial in the proof of the main theorem, we are obliged to bound the second-order energy moment (Proposition \ref{Propo:SecondOrderMoment}). In what follows, we take initial data $f_0(p)=f_0(|p|)$ with finite energy, and thus thanks to the conservation of energy \eqref{Coro:ConservatioMomentum}, energy remains finite for all times.


\begin{proposition}\label{Propo:SecondOrderMoment} Let $f_0(p)=f_0(|p|)\ge 0$ have finite energy. Then, for any $\tau>0$, nonnegative radial solutions $f(t,p) = f(t,|p|)$ of \eqref{QuantumBoltzmann} with initial data $ f_0(p)$ satisfy
\begin{equation}\label{2nd-moment}\sup_{t\in[\tau,\infty)}\int_{\mathbb{R}^3}f(t,p)\mathcal{E}^2(p)dp<+\infty.\end{equation}
\end{proposition}
\begin{proof}
Take $\varphi =\mathcal{E}^2(p)$ to be the test function in Lemma \ref{Lemma:WeakFormulation}. We obtain 
$$
\begin{aligned}
\frac{d}{dt}
 \int_{\mathbb{R}^3} f\mathcal{E}^2dp ~~ = & ~~ \iiint_{\mathbb{R}^9} \mathcal{R}_{p, p_1, p_2}[f] \Big( \mathcal{E}^2(p)-\mathcal{E}^2(p_1)-\mathcal{E}^2(p_2)\Big)dpdp_1dp_2.
\end{aligned}
$$
In view of the Dirac delta functions in the collision kernel \eqref{Kernel}, the integral is on the surface dictated by the conditions $p = p_1 + p_2$ and $\cE(p) = \cE(p_1) + \cE(p_2)$. In particular, on the surface, $\mathcal{E}^2(p)-\mathcal{E}^2(p_1)-\mathcal{E}^2(p_2) = 2\mathcal{E}(p_1)\mathcal{E}(p_2)$. Thus, upon recalling that $f\ge 0$, we have 
$$
\begin{aligned}
\frac{d}{dt} \int_{\mathbb{R}^3} f\mathcal{E}(p)^2dp 
& = 2 \iiint_{\mathbb{R}^9} \mathcal{R}_{p, p_1, p_2}[f] \mathcal{E}(p_1)\mathcal{E}(p_2) dpdp_1dp_2
\\&=  2\iiint_{\mathbb{R}^9}\mathcal{K}(p,p_1,p_2) \Big( f_1f_2-(1+f_1+f_2)f \Big )\mathcal{E}(p_1)\mathcal{E}(p_2)dpdp_1dp_2
\\&\le  2\iiint_{\mathbb{R}^9}\mathcal{K}(p,p_1,p_2) \Big( f_1f_2- f \Big )\mathcal{E}(p_1)\mathcal{E}(p_2)dpdp_1dp_2.
\end{aligned}
$$
Let us set 
\begin{equation}\label{def-J1234}\begin{aligned}
J_1 :& = 2\iiint_{\mathbb{R}^9}\mathcal{K}(p,p_1,p_2) f_1 f_2 \mathcal{E}(p_1)\mathcal{E}(p_2)dpdp_1dp_2
\\
J_2: &= - 2\iiint_{\mathbb{R}^9}\mathcal{K}(p,p_1,p_2) f \mathcal{E}(p_1)\mathcal{E}(p_2)dpdp_1dp_2.\end{aligned}
\end{equation}
We write $J_1,J_2$ in term of surface integrals. Recalling 
$$\begin{aligned}
\mathcal{K}(p, p_1, p_2)&=|p|^\rho|p_1|^\rho|p_2|^\rho \delta (p-p_1-p_2)\delta (\mathcal{E}(p) -\mathcal{E}(p_1)-\mathcal{E}(p_2)) 
\end{aligned}
$$
and following \eqref{intro-Surface123}, we write 
$$
\begin{aligned}J_1 & = 2\iint_{\mathbb{R}^6}\mathcal{K}(p_1+p_2,p_1,p_2) f_1f_2  \mathcal{E}(p_1)\mathcal{E}(p_2)dp_1dp_2
\\
&\lesssim \int_{\RR^3}\int_{S_{p_1}'} |p_1+p_2|^\rho|p_1|^\rho|p_2|^\rho f_1f_2  \mathcal{E}(p_1)\mathcal{E}(p_2) \frac{d\sigma(p_2)dp_1}{|\nabla G_{p_1}(p_2)|}
\\
&\lesssim \int_{\RR^3}  |p_1|^\rho f_1  \mathcal{E}(p_1) \Big(  \int_{S_{p_1}'} |p_2|^\rho (|p_1|^\rho+|p_2|^\rho) f_2 \mathcal{E}(p_2) \frac{d\sigma(p_2)}{|\nabla G_{p_1}(p_2)|} \Big) dp_1.
\end{aligned}$$
By Lemma \ref{lem-Sp1}, and the fact that $f$ is radial, the surface integral is estimated by 
$$
\begin{aligned} \int_{S_{p_1}'}(|p_1|^\rho+|p_2|^\rho)|p_2|^\rho f_2\mathcal{E}(p_2)  \frac{d\sigma(p_2)}{|\nabla G_{p_1}(p_2)|} 
&\lesssim  C|p_1|^{-1}\int_{\mathbb{R}_+}(|p_1|^\rho+|p_2|^\rho)|p_2|^{\rho+1}f_2\mathcal{E}(p_2)d(|p_2|)\\
&\lesssim C|p_1|^{-1}\int_{\mathbb{R}^3}(|p_1|^\rho+|p_2|^\rho)|p_2|^{\rho-1}f_2\mathcal{E}(p_2)dp_2 .
\end{aligned}
$$
Thus, upon recalling $\mathcal{E} (p)=|p|\sqrt{1+|p|^2} \ge |p|$, we obtain 
$$
\begin{aligned}J_1 
&\lesssim  \iint_{\RR^6} (|p_1|^\rho + |p_2|^\rho)|p_2|^{\rho-1} f_1  \mathcal{E}(p_1) f_2 \mathcal{E}(p_2) 
dp_1 dp_2
\\
&\lesssim  \iint_{\RR^6}  (\mathcal{E}(p_1)^{\rho}+\mathcal{E}(p_2)^{\rho}) f_1  \mathcal{E}(p_1) f_2 \mathcal{E}(p_2)^{\rho} 
dp_1 dp_2
\\
&\lesssim \Big( \int_{\RR^3}  \mathcal{E}(p_1)^{1+\rho} f_1 \; dp_1 \Big)  \Big( \int_{\RR^3}  \mathcal{E}(p_2)^{\rho} f_2 \; dp_2 \Big) + \Big( \int_{\RR^3}  \mathcal{E}(p_1) f_1 \; dp_1 \Big)  \Big( \int_{\RR^3}  \mathcal{E}(p_2)^{2\rho} f_2 \; dp_2. \Big),\end{aligned}
$$

By using the fact that the mass, energy, the quantity $\int_{\RR^3}  \mathcal{E}(p_2)^{\rho} f_2 \; dp_2 $ are bounded and $$2\mathcal{E}(p_1)^{1+\rho}\le \mathcal{E}(p_1)^2+\mathcal{E}(p_1)^{2\rho}$$ the above yields 
\begin{equation}\label{est-J11}
\begin{aligned}J_1~~~
&\lesssim  \int_{\RR^3}  \mathcal{E}(p_1)^{1+\rho} f_1 \; dp_1 + \int_{\RR^3}  \mathcal{E}(p_2)^{\rho} f_2 \; dp_2\\
&\lesssim  \int_{\RR^3}  \mathcal{E}(p_1)^{2} f_1 \; dp_1 + \int_{\RR^3}  \mathcal{E}(p_2)^{\rho} f_2 \; dp_2. 
\end{aligned}
\end{equation}

Next, we estimate the integral $J_2$ in \eqref{def-J1234}. Following \eqref{intro-Surface123}, we estimate  
$$
\begin{aligned}J_2 
& =- 2\iint_{\mathbb{R}^6}\mathcal{K}(p,p_1,p-p_1) f\mathcal{E}(p_1)\mathcal{E}(p-p_1)dpdp_1
\\
& = - 2 \int_{\RR^3} \Big( \int_{S_{p}} |p_1|^{\rho}|p-p_1|^{\rho}  \mathcal{E}(p_1)\mathcal{E}(p-p_1)\frac{d\sigma(p_1)}{|\nabla H_p(p_1)|}\Big) |p|^{\rho} fdp 
.\end{aligned}$$
Recalling $\cE(p)\ge \frac12(|p| + |p|^2)$ and using \eqref{lem-Sp-e2} in Lemma \ref{lem-Sp}, we estimate 
\begin{equation}\label{low-J2S}
\begin{aligned}
& |p|^{\rho} \int_{S_{p}} |p_1|^{\rho}|p-p_1|^{\rho}  \mathcal{E}(p_1)\mathcal{E}(p-p_1)\frac{d\sigma(p_1)}{|\nabla H_p(p_1)|}\\
\gtrsim &   (|p|^{3\rho+3} + |p|^{3\rho+5})\;  \min\{ 1, |p|\}
\gtrsim  |p|^{3\rho+5}.
 \end{aligned}\end{equation}
 This proves 
\begin{equation}\label{Propo:SecondOrderMoment:E3a}
\begin{aligned}
J_2 \lesssim &- \int_{\mathbb{R}^3}|p|^{3\rho+5} fdp .
\end{aligned}
\end{equation}

Moreover, we also have
 $$ 
\begin{aligned}
\int_{\RR^3} \cE^{2\rho}(p) f \; dp 
\lesssim  
C(\epsilon) \Big( 
\int_{\RR^3} \cE^2(p) f\; dp\Big)
 + \epsilon\left(\int_{\mathbb{R}^3}|p|^{3\rho+5} fdp\right),
   \end{aligned}$$ 
   for some small constant $\epsilon$.

Therefore 

\begin{equation}\label{Propo:SecondOrderMoment:E3}
\begin{aligned}
J_2'& = J_2 + C(\epsilon) \Big( 
\int_{\RR^3} \cE^2(p) f\; dp\Big)
 + \epsilon\left(\int_{\mathbb{R}^3}|p|^{3\rho+5} fdp\right)\\
  \lesssim &- \int_{\mathbb{R}^3}|p|^{3\rho+5} fdp +  \int_{\RR^3} \cE^2(p) f\; dp.
\end{aligned}
\end{equation}

In addition, using the H\"older inequality, we estimate 
$$ 
\begin{aligned}
\int_{\RR^3} \cE^2(p) f \; dp 
&\lesssim  
\int_{\RR^3} |p|^2 f\; dp
 +  \int_{\RR^3} |p|^4 f\; dp 
\\
&\lesssim C 
\int_{\RR^3} |p|^2 f\; dp
 +  
 \left(\int_{\mathbb{R}^3}|p|^{3\rho+5} fdp\right)^{\frac{2}{3\rho+3}}\left(\int_{\mathbb{R}^3}|p|^2 fdp\right)^{\frac{3\rho+1}{3\rho+3}} 
\\
&\lesssim  
\Big(\int_{\RR^3} |p|^2 f\; dp\Big)^{\frac{2}{3\rho+3}}
 +  
 \left(\int_{\mathbb{R}^3}|p|^{3\rho+5} fdp\right)^{\frac{2}{3\rho+3}} ,\end{aligned}$$ 
in which the last inequality was due to the fact that $|p|^2\le \cE(p)$ and the energy is bounded. Again, using $|p|\le \cE(p)$, we thus obtain 
 $$ 
\begin{aligned}
\int_{\RR^3} \cE^2(p) f \; dp 
\lesssim  
\Big( 
\int_{\RR^3} \cE^2(p) f\; dp\Big)^{\frac{2}{3\rho+3}}
 + \left(\int_{\mathbb{R}^3}|p|^{3\rho+5} fdp\right)^{\frac{2}{3\rho+3}} .
   \end{aligned}$$

This and \eqref{Propo:SecondOrderMoment:E3} yield
\begin{equation}\label{est-J22} J_2' \lesssim 
- \theta_0  \Big( \int_{\RR^3} \cE^2(p) f \; dp  \Big)^{\frac{3\rho+3}{2}} + \int_{\RR^3} \cE^2(p) f\; dp .\end{equation}

To conclude, we have proven 
\begin{equation}
\label{Propo:SecondOrderMoment:E6}
\frac{d}{dt} \int_{\mathbb{R}^3} f\mathcal{E}^2dp  \lesssim \int_{\mathbb{R}^3} f\mathcal{E}^2dp \Big[ 1 -\theta_1 \Big(\int_{\mathbb{R}^3}f\mathcal{E}^2dp\Big)^{\frac{3\rho+3}{2}} \Big]
\end{equation}
for some positive constants $C_1, \theta_1$. Since $f\ge 0$, the standard ODE argument applying to the differential inequality \eqref{Propo:SecondOrderMoment:E6} yields at once 
the boundedness of $\int_{\mathbb{R}^3} f\mathcal{E}^2dp$; for instance, there holds 
$$  \int_{\mathbb{R}^3} f(t,p)\mathcal{E}^2dp \lesssim \max \Big\{ \frac{1}{{\theta_1}^{\frac{2}{3\rho+3}}},  \int_{\mathbb{R}^3} f(\tau,p)\mathcal{E}^2dp \Big\}$$ 
for all $t \ge \tau$. The proposition follows. 
\end{proof}

\begin{remark}
Following similar lines of the above proof, we can in fact show that energy moments at any order are created and propagated in positive times as in Proposition \ref{Propo:SecondOrderMoment} for the second-energy moment.  
We skip the details as the result will not be used in this paper.\end{remark}



 \section{Uniform lower bound}


In this section, we shall prove our main theorem, Theorem \ref{TheoremLowerBound}. Let us first write the collision operator as follows:
\begin{equation}\label{def-gainloss} 
\mathcal{C}_{12}[f] = Q_\mathrm{gain}[f]  -  Q_\mathrm{loss}[f]\end{equation}
where the Gain and Loss operators are defined by 
$$
\begin{aligned}
Q_\mathrm{gain}[f]&:=\iint_{\mathbb{R}^3\times\mathbb{R}^3} \mathcal{K}(p, p_1, p_2) f_1 f_2 
dp_1dp_2 + 2\iint_{\mathbb{R}^3\times\mathbb{R}^3}\mathcal{K}(p_1, p, p_2) (1+f + f_2) f_1 \; dp_1 dp_2
\\
Q_\mathrm{loss}[f]&:= f\iint_{\mathbb{R}^3\times\mathbb{R}^3} \mathcal{K}(p, p_1, p_2) (1 + 2 f_2)
dp_1dp_2 + 2f\iint_{\mathbb{R}^3\times\mathbb{R}^3}\mathcal{K}(p_1, p, p_2) f_2 \; dp_1 dp_2.
\end{aligned}$$
For convenience, we also write 
\begin{equation}\label{def-LfQ}Q_\mathrm{loss}[f] = f \mathcal{L}[f],\end{equation}
$\mathcal{L}[f]$ is usually called the collision frequency.
\bigskip

\begin{lemma}\label{LemmGainLoss}
Suppose that $F(p)\leq G(|p|)$, for some radially symmetric function $G$ with $$\mathcal{M} = \int_{\mathbb{R}_+}G(u) \; (u^{1+\rho} + u^{1+2\rho}) du  < \infty.$$
Then, there holds
\begin{eqnarray}\label{LossBound}
\mathcal{L}[F](p) \le C_0 \mathcal{M} (|p|^\rho+|p|^{2\rho}) + C_0 |p|^{3\rho+1}
\end{eqnarray}
for some positive universal constant $C_0$. 

In addition, if 
$$\mathcal{N} = \int_{\mathbb{R}_+}G(u) \; u^{\rho+1}  du  < \infty,$$
and
$$\mathcal{P} = \int_{\mathbb{R}_+}G(u) \; u^{2\rho+1}  du  < \infty,$$
then 
\begin{eqnarray}\label{LossBound2}
\mathcal{L}[F](p) \le C_1 \mathcal{P}|p|^\rho + C_1\mathcal{N}|p|^{2\rho}+ C_1 |p|^{3\rho+1}
\end{eqnarray}
for some positive universal constant $C_1$. 

\end{lemma}
\begin{proof} We first write the collision integrals in term of surface integrals. Following \eqref{intro-Surface123}, we have 
$$
\begin{aligned}
\mathcal{L}[F] = &\int_{S_p} |p|^\rho| p - p_2|^\rho| p_2|^\rho (1 + 2F_2 ) \; \frac{d\sigma(p_2)}{|\nabla H_p(p_2)|}
+ 2\int_{S_p'}|p + p_2|^\rho|p|^\rho| p_2|^\rho F_2 \; \frac{d\sigma(p_2)}{|\nabla G_p(p_2)|}.
\end{aligned}$$
Consider the surface integral over $S_p$. Recall that that  $|p_2|\le |p|$ and $|p-p_2|\le |p|$ on $S_p$. Hence, using Lemma \ref{lem-Sp}, we estimate 
$$\begin{aligned}
&\int_{S_p} |p|^\rho| p - p_2|^\rho| p_2|^\rho  (1 + 2F_2) \; 
\frac{d\sigma(p_2)}{|\nabla H_p(p_2)|}\\
&\lesssim   |p|^{2\rho}  \int_{S_p}  (1 + 2G(|p_2|) )|p_2|^\rho \; \frac{d\sigma(p_2)}{|\nabla H_p(p_2)|}\\
&\lesssim |p|^{2\rho}  \int_0^{|p|} (1+ G(u)) \; \min\{1,u\} u^{\rho} du 
\\
&\lesssim |p|^{3\rho+1}  +  |p|^{2\rho} \int_0^{|p|} G(u)u^{\rho+1}  du ,
\end{aligned}$$
which yields the claimed bound for the integral on $S_p$. Next, we check the integral on $S_p'$. Lemma \ref{lem-Sp1} yields 
$$\begin{aligned}
\int_{S_p'}|p + p_2|^{\rho}|p|^{\rho}| p_2|^{\rho}F_2 \; \frac{d\sigma(p_2)}{|\nabla G_p(p_2)|}
&\lesssim \int_{S_p'} (|p|^{\rho} + |p_2|^{\rho})|p|^{\rho}| p_2|^{\rho} G(|p_2|) \; \frac{d\sigma(p_2)}{|\nabla G_p(p_2)|}
\\
&\lesssim |p|^{\rho}\int_0^\infty (|p|^{\rho} + u^{\rho}) G(u) u^{\rho+1} du
\end{aligned} $$ 
which is bounded by $C_0 \mathcal{M} (|p|^{2\rho}+|p|^{\rho}) $ and $C_1 \mathcal{P}|p|^{\rho} + C_1\mathcal{N}|p|^{2\rho}$. The lemma follows. 
\end{proof}

\begin{lemma}\label{LemmaGainLowerBound} Let $\delta, \theta >0$, and $F$ be any nonnegative smooth function so that $F(p)\geq\theta$ on $B_\delta: = \{|p|\leq\delta\}$. Then, there exists a universal constant $c_0>0$ such that
\begin{equation}
Q_\mathrm{gain}[F](p)\geq c_0 |p|^{3\rho+1}\min\{1,|p|\} \theta^2
\end{equation}
for all $p\in B_{\sqrt2 \delta }.$
\end{lemma}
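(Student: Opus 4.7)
The plan is to split into two cases based on $|p|$ relative to $\delta$, using different pieces of $Q_\mathrm{gain}[F]$.

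For $|p|\le \delta$, one has $F(p)\ge\theta$, and on the subset $E' := S_p'\cap\{|p+p_2|\le\delta\}$ also $F(p+p_2)\ge\theta$. Keeping only the $F(p+p_2)F(p)$ contribution in the $S_p'$-integral of $Q_\mathrm{gain}[F]$ yields
\[
Q_\mathrm{gain}[F](p) \ge 2\kappa_0\theta^2|p|\int_{E'} |p+p_2||p_2|\,d\sigma(p_2).
\]
Since $p\cdot p_2\ge 0$ on $S_p'$ (established in the proof of Lemma~\ref{lem-Sp1}), $|p+p_2|\ge|p_2|$, so the integrand is bounded below by $|p_2|^2$. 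For $|p|\le \delta/2$, $E'$ contains $S_p'\cap\{|p_2|\le \delta/2\}$ by the triangle inequality. I would apply the lower bound \eqref{low-Sp123} of Lemma~\ref{lem-Sp1} with the choice $F(u)=u^2\chi_{[0,\delta/2]}(u)$ to obtain $\int_{E'}|p_2|^2\,d\sigma \gtrsim \int_0^{\delta/2}\min\{\delta,|p|+u\}\,u^3\,du \gtrsim \delta^5$. This gives $Q_\mathrm{gain}\gtrsim \kappa_0\theta^2|p|\delta^5$, which dominates the target $c_0|p|\min\{1,|p|\}\delta^4\theta^2$ since $\delta\ge \min\{1,|p|\}$ in this range.

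For $|p|\in (\delta, \sqrt 2\delta]$, I would use the first integral over $S_p$, parameterized as $p_2=\alpha p+|q_\alpha|e_\theta$ for $\alpha\in[0,1]$ and $\theta\in[0,2\pi]$ as in the proof of Lemma~\ref{lem-Sp}. Using the estimates $|q_\alpha|\lesssim |p|^2$ for small $|p|$ from \eqref{q-smallp} together with $|q_{1/2}|^2\lesssim |p|^2$ from \eqref{q12}, one verifies that on the subset $E=\{\alpha\in[\alpha_*, 1-\alpha_*]\}$ for a sufficiently small fixed $\alpha_*\in(0,1/2)$, both $|p_2|^2 = \alpha^2|p|^2 + |q_\alpha|^2\le\delta^2$ and $|p-p_2|^2 = (1-\alpha)^2|p|^2+|q_\alpha|^2\le\delta^2$, so $F(p-p_2)F(p_2)\ge\theta^2$ on $E$. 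Moreover $|p-p_2||p_2|\ge \alpha_*(1-\alpha_*)|p|^2\gtrsim|p|^2$, and since $E$ covers a fixed fraction of $S_p$, the area estimate in Lemma~\ref{lem-Sp} gives $\int_E d\sigma\gtrsim |p|^2\min\{1,|p|\}$. Combining, $Q_\mathrm{gain}\gtrsim \kappa_0\theta^2|p|^5\min\{1,|p|\}$, which dominates $c_0|p|\min\{1,|p|\}\delta^4\theta^2$ because $|p|^4\ge\delta^4$ in this range.

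The main obstacle is carrying out the surface-integral lower bounds uniformly in $|p|$ and $\delta$. In the first case this means keeping track of the $\delta$-dependence of the constant in \eqref{low-Sp123} carefully enough to extract the full $\delta^5$ factor. In the second case, verifying that $E$ has area of order $|p|^2\min\{1,|p|\}$ requires a direct computation of $\sqrt{|p|^2|q_\alpha|^2+\tfrac14|\partial_\alpha|q_\alpha|^2|^2}$ in both regimes $|p|\lesssim 1$ and $|p|\gtrsim 1$, matching the dichotomy in Lemma~\ref{lem-Sp}; some care is also needed at the transition $|p|\sim\delta$ where the two cases overlap, but the computed lower bounds match the target there up to universal constants.
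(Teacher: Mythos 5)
Your route differs from the paper's in a substantive way in the small-$|p|$ regime, and the difference is to your credit. The paper uses only the first surface integral (over $S_p$) in $Q_\mathrm{gain}$, across all three of its cases. For $|p|\le\delta/2$ it cites the bound \eqref{lem-Sp-e2} over $S_p\cap B(0,\delta/2)$, but since $S_p\subset B(p/2,|p|/2)\subset B(0,|p|)$ one has $S_p\cap B(0,\delta/2)=S_p$ in this regime, and \eqref{lem-Sp-e2} only yields $c_1|p|^4\min\{1,|p|\}$, not the $\delta^4\min\{1,|p|\}$ the paper asserts; the $S_p$ surface is simply too small (area $\sim |p|^2\min\{1,|p|\}$) to supply a $\delta^4$ when $|p|\ll\delta$. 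Your use of the $S_p'$ contribution, with the $F(p+p_2)F(p)$ term and the lower bound \eqref{low-Sp123}, genuinely produces a bound of order $|p|\delta^5\theta^2$ (with the parameter in \eqref{low-Sp123} fixed once and for all, so the $c_\delta$ there does not degrade as $\delta\to 0$), and $|p|\delta^5\ge |p|\min\{1,|p|\}\delta^4$ holds because $\min\{1,|p|\}\le\delta$ when $|p|\le\delta/2$. So in this regime your argument is the more robust one. The price you pay is a small amount of extra bookkeeping tracking the $|p+p_2|\le\delta$ constraint.

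In the regime $\delta<|p|\le\sqrt2\delta$ both proofs use $S_p$, but your restriction to $\alpha\in[\alpha_*,1-\alpha_*]$ has the quantifier on $\alpha_*$ backwards, and this is a genuine gap as written. At $\alpha=\alpha_*$ small one has $|p-p_2|^2=(1-\alpha_*)^2|p|^2+|q_{\alpha_*}|^2\approx|p|^2$, which exceeds $\delta^2$ whenever $|p|>\delta$; so $F(p-p_2)$ need not be bounded below by $\theta$. You actually need $\alpha_*$ \emph{close to} $1/2$: using $|q_\alpha|^2<\alpha(1-\alpha)|p|^2$ (which follows from $S_p\subset B(p/2,|p|/2)$) one gets $|p_2|^2<\alpha|p|^2\le 2\alpha\delta^2$ and $|p-p_2|^2<(1-\alpha)|p|^2\le 2(1-\alpha)\delta^2$, so both constraints force $\alpha$ into a shrinking interval about $1/2$ as $|p|\to\sqrt2\delta$; making this uniform requires extracting a uniform lower bound on the slack $\alpha(1-\alpha)|p|^2-|q_\alpha|^2$, which you have not done. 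The paper avoids this delicacy by integrating directly over $S_p\cap B(0,\delta)\cap B(p,\delta)$ and arguing via the disk $D_p$ at $\alpha=1/2$ that this intersection has nonnegligible area. (The paper's own area argument is itself somewhat informal, but the ball-intersection framing is the cleaner route here and I would suggest you adopt it rather than the $\alpha$-interval restriction.)
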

\begin{proof} By definition \eqref{def-gainloss} and the assumption on the lower bound on $F$, we have 
$$ 
\begin{aligned}
Q_\mathrm{gain}[F](p)&=\int_{S_p} {K}(p, p - p_2, p_2)F(p - p_2)F(p_2) \; d\sigma(p_2)
\\&\quad + 2\int_{S_p'}K(p + p_2, p, p_2)F(p + p_2) \Big (F(p)+F(p_2)+1\Big )\; d\sigma(p_2)
\\&\gtrsim \int_{S_p} \mathcal{K}(p, p - p_2, p_2)F(p - p_2)F(p_2) \; d\sigma(p_2)
\\&\gtrsim  |p|^{\rho} \theta^2\int_{S_p \cap B(0,\delta) \cap B(p,\delta)} |p-p_2|^{\rho}|p_2|^{\rho}d\sigma(p_2) ,
\end{aligned} $$
in which we note again that $p_2, p-p_2$ are both in $B_\delta$, thanks to the monotonicity of the energy function $\mathcal{E}(p)$.

To proceed, we consider three cases. First, take $p \in B(0,\delta)\setminus B(0, \frac{\delta}{2})$. In this case, $B(\frac p2, \frac{|p|}{2}) \subset B(0,\delta) \cap B(p,\delta)$, and so we can estimate
$$ 
\begin{aligned}
Q_\mathrm{gain}[F](p)
&\gtrsim  |p|^{\rho} \theta^2 \int_{S_p \cap B(\frac p2, \frac{|p|}{2})} |p-p_2|^{\rho}|p_2|^{\rho} d\sigma(p_2)
\\
&\gtrsim |p|^{3\rho+1}\min\{1,|p|\} \theta^2 ,
\end{aligned}$$ 
for some positive constants $c_0,c_1$, thanks to the lower bound \eqref{lem-Sp-e2} in Lemma \ref{lem-Sp}, with $\gamma=1$.

Next, for $p\in B(0,\frac{\delta}{2})$, we note that $B(0, \frac\delta 2) \subset B(0,\delta) \cap B(p,\delta)$. Hence, in this case, we have, by the lower bound \eqref{lem-Sp-e2}, 
$$ 
\begin{aligned}
Q_\mathrm{gain}[F](p)
&\gtrsim  |p|^{\rho} \theta^2 \int_{S_p \cap B(0,\frac\delta 2)} |p-p_2|^{\rho} |p_2|^{\rho} d\sigma(p_2) 
\\&\gtrsim |p|^{3\rho+1}\min\{1,|p|\} \theta^2.
\end{aligned}$$ 
The lemma is proved for $|p|\le \frac \delta 2$. 

Finally, we consider  the case when $p \in B(0,\sqrt 2 \delta) \setminus B(0,\delta)$. In this case, we check that $S_p \cap B(0,\delta) \cap B(p,\delta)$ has positive surface area. Indeed, let $D_p$ be the disk that is centered at $\frac p2$, of radius $\sqrt{\delta^2 - \frac{|p|^2}{4}}$, and is on the plane orthogonal to $p$. Let $x$ be a point on the boundary of $D_p$, then $|x-p/2|=\sqrt{\delta^2 - \frac{|p|^2}{4}}$ and  $x-p/2$ is orthogonal to $p$. As a consequence, $|x|^2=|x-p/2|^2+|p/2|^2=\delta^2$ and $|x-p|^2=|x-p/2|^2+|p/2|^2$.  It is clear that $D_p$ belongs to the intersection $B(0,\delta) \cap B(p,\delta)$ and, since $\sqrt{\delta^2 - \frac{|p|^2}{4}} \ge \frac{|p|}{2}$, the surface $S_p$ crosses the interior of $D_p$. This proves that $S_p \cap B(0,\delta) \cap B(p,\delta)$ is non empty. Since $B(0,\delta) \cap B(p,\delta)$ has positive Lebesgue measure, the surface area of $S_p \cap B(0,\delta) \cap B(p,\delta)$ is bounded below from zero by a constant times $|p|$, since any geodesic on the surface starting from $0$ to $p$ has a greater length than $|p|$. We can then compute 
$$ 
\begin{aligned}
Q_\mathrm{gain}[F](p)
&\gtrsim  |p|^{\rho} \theta^2\int_{S_p \cap B(0,\delta) \cap B(p,\delta)} |p-p_2|^{\rho} |p_2|^{\rho} d\sigma(p_2)
\\&\gtrsim |p|^{3\rho+1}\min\{1,|p|\} \theta^2,
\end{aligned}$$ 
due  to the lower bound \eqref{lem-Sp-e2}.
This completes the proof of the lemma. 
\end{proof}

\begin{lemma}\label{LemmSolutionLowerBound}
Let $\delta, \theta>0$. Suppose that initial data $f_0(p)\geq\theta$ on $B_\delta $,  where  $B_\delta=\{|p|\leq\delta\}$. Let $f(t,p)$ be a solution to \eqref{QuantumBoltzmann} so that $f(t,p)\leq G(t,|p|)$ for all $t\ge 0$ and for some radially symmetric function $G$ so that 
\begin{equation}\label{moments-G}\mathcal{M} (t)= \sup_{0\le s\le t} \int_{\mathbb{R}_+}G(s,u) ( | u |^{\rho+1} + |u|^{2\rho+1})du <\infty .\end{equation}
Then, there holds the following uniform lower bound 
\begin{equation}\label{LemmSolutionLowerBound:1}
f(t,p) \geq C_0{t} e^{- t   \mathcal{M}(t)L_*(\delta) }
|p|^{3\rho+1}\min\{1,|p|\} \theta^2, \qquad \forall~t\ge 0,
\end{equation}
for all $p\in B_{{\sqrt2\delta}}$, $$L_*(\delta) :=  c_0( 1+ \delta^{3\rho+1}).$$ Here, $c_0, C_0$ are some universal positive constants  independent of $\mathcal{M}$, $\delta, \theta$ and $p$. 

In addition, if 
$$\mathcal{N}(t)= \sup_{0\le s\le t} \int_{\mathbb{R}_+}G(s,u) \; u^{\rho+1}  du  < \infty,$$
and
$$\mathcal{P}(t) =\sup_{0\le s\le t} \int_{\mathbb{R}_+}G(t,u) \; u^{2\rho+1}  du  < \infty$$
is conserved for all time $t$,
then \begin{equation}\label{LemmSolutionLowerBound:2}
f(t,p) \geq C_1{t} e^{- t   [\mathcal{N}(t)L^*(\delta)+c_1\mathcal{P}(t)\delta^\rho]}
|p|^{3\rho+1} \min\{1,|p|\} \theta^2, \qquad \forall~t\ge 0,
\end{equation}
for all $p\in B_{{\sqrt2\delta}}$, $$L^*(\delta) :=  c_1(1 + \delta^{3\rho+1}).$$ Here, $c_1, C_1$ are some universal positive constants  independent of $\mathcal{N}$, $\mathcal{P}$, $\delta, \theta$ and $p$. 
\end{lemma}
\begin{proof} Using Lemma \ref{LemmGainLoss}, with $F = f(t,p)$, we obtain 
\begin{equation}\label{Low-ineq}
\partial_t f (t,p) + L_0(t,|p|) f(t,p) \ge Q_\mathrm{gain}[f](t,p) 
\end{equation}
with $L_0(t,|p|) = C_0 \mathcal{M}(t) (1+|p|^{2\rho} ) + C_0 |p|^{3\rho+1} $. Note that $\mathcal{M}(t)$ and hence $L_0(t,|p|)$ are increasing in $t$. Using the monotonicity and applying the Duhammel's representation to \eqref{Low-ineq}, we obtain 
\begin{equation}\label{Duh}
f(t,p)
\geq f_0(p) e^{- \int_0^tL_0(s,|p|)ds}
+\int_0^t e^{-\int_\tau^t L_0(s,|p|)ds}Q_\mathrm{gain}[f](\tau, p)d\tau
\end{equation}
for all $t\ge 0$. Since $Q_\mathrm{gain}[f](p) \ge 0$ and $L_0(t, |p|)$ is an increasing function in $t$, it follows that for $p \in B_\delta$, \eqref{Duh} yields 
\begin{equation}\label{f-lowww}
f(t,p)\quad \geq \quad f_0(p) e^{-t L_0(t,|p|)} \quad \ge\quad  \theta e^{-t L_0(t,\delta)}, \qquad t\ge 0.
\end{equation}
Next, for each fixed time $t\ge 0$, we now apply Lemma \ref{LemmaGainLowerBound} for $F = f(t,p)$, with the new lower bound \eqref{f-lowww} on $B_\delta$, 
yielding 
$$Q_\mathrm{gain}[f](t,p) \ge  C_0|p|^{3\rho+1} \min\{1,|p|\}\theta^2 e^{-2 t L_0(t,\delta)},
$$
for all $p \in B_{\sqrt 2 \delta}$. Putting this into \eqref{Duh}, we obtain  
$$
\begin{aligned}
f(t,p)
&\geq \int_0^t e^{-\int_\tau^t L_0(s,|p|)ds} Q_\mathrm{gain}[f](\tau, p)d\tau
\\&\geq \int_0^t e^{-\int_\tau^t L_0(t,|p|)ds} Q_\mathrm{gain}[f](\tau, p)d\tau
\\&\geq \int_0^t e^{-(t-\tau) L_0(t,\delta) } Q_\mathrm{gain}[f](\tau, p)d\tau
\\&\gtrsim |p|^{3\rho+1} \min\{1,|p|\} \theta^2 \int_0^t e^{-(t-\tau) L_0(t,\delta) } e^{-2 \tau L_0(t,\delta)} d\tau
\\&\gtrsim|p|^{3\rho+1} \min\{1,|p|\} \theta^2 e^{-2 t  L_0(t,\delta)}  t
\\&\gtrsim|p|^{3\rho+1} \min\{1,|p|\} \theta^2  e^{-t C_0 \mathcal{M}(t)  L_*(\delta)} t.
\end{aligned}
$$
This completes the proof of \eqref{LemmSolutionLowerBound:1}. The second inequality \eqref{LemmSolutionLowerBound:2} can be proved by exactly the same procedure.
\end{proof}

\subsection{Proof of Theorem \ref{TheoremLowerBound}} 

We are now ready to give the proof of Theorem \ref{TheoremLowerBound}. Let $\theta_0, R_0>0$ as in the assumption of Theorem \ref{TheoremLowerBound} so that $f_0(p)\geq 2\theta_0 $ on $B_{2R_0}=\{|p|\leq 2R_0\}$. Let $\tau$ be sufficiently small so that $f(\tau,p)\geq\theta_0 $ on $B_{R_0}$, thanks to the continuity in time of the (classical) solution $f(t,p)$.

In the proof, we shall apply Lemma \ref{LemmSolutionLowerBound} repeatedly to the solution $f(t,p)$ of \eqref{QuantumBoltzmann}, with $G(t,|p|) = f(t,|p|)$. First, we note that since $f(t,p)$ is radially symmetric and $\cE(p)\ge |p|$, we have  
$$
\begin{aligned}
 \int_{\mathbb{R}_+}f(t,|p|) (|p|^{1+\rho}+|p|^{1+2\rho})  d|p| \le C_0 \int_{\RR^3} f(t,p) (1+\cE(p)^2)\; dp \le C_\tau,\end{aligned}$$
for all $t\ge \tau$, thanks to the conservation of mass and the boundedness of second order energy-moment. This verifies the assumption \eqref{moments-G} on $G(t,|p|) = f(t,|p|)$, made in Lemma  \ref{LemmSolutionLowerBound}, with $\mathcal{M}(t) = C_\tau$, which is time-independent.

Fix a positive and sufficiently small $\delta <  R_0$, and a positive time $t_0$ so that 
\begin{equation} t_0<\frac14.\end{equation}
Since $f_0(\tau,p)\ge \theta_0$ on $B_\delta$, applying Lemma  \ref{LemmSolutionLowerBound}  to the solution $f(t,p)$ of \eqref{QuantumBoltzmann} with the initial data $f(\tau,p)$ yields 
\begin{equation}\label{TheoremLowerBound:1}\begin{aligned}
f(\tau+t_0,p) &\geq  t_0 e^{- t_0C_\tau L_*(\delta) }  C_p \theta_0^2, 
\end{aligned} 
\end{equation}
for all $p \in B_{\sqrt 2 \delta}$, in which $L_*(\delta) = c_0( 1+ \delta^{3\rho+1}) $, 
\begin{equation}\label{def-Cppp}C_p := C_0  |p|^{3\rho+1} \min\{1,|p|\}.\end{equation}

We stress that $C_p$ does not depend on $\delta$ and $t_0$, and hence the estimate \eqref{TheoremLowerBound:1} can be iterated. Indeed, applying again Lemma \ref{LemmSolutionLowerBound} to the solution $f(t,p)$ of \eqref{QuantumBoltzmann} with the initial data $f(\tau+t_0,p)$ satisfying \eqref{TheoremLowerBound:1}, yielding

$$ 
\begin{aligned}
f(\tau+ t_0+t_1,p) 
&\ge  t_1 e^{- t_1 L_*(\sqrt 2\delta) }
C_p  \Big[  t_0 e^{- t_0 L_*(\delta) }
C_p\theta_0^2 \Big]^2
\\
&\ge 
 t_1  t_0^2 e^{- t_1 L_*(\sqrt 2 \delta)} e^{-2t_0 L_*(\delta) } 
\Big(C_p \Big)^{1+2} \theta_0^{2^2}
\end{aligned}$$
for arbitrary positive time $t_1<\frac14$ and for all $p\in B_{{\sqrt2^2}\delta}$. For each fixed integer $n\ge 2$, we iteratively apply Lemma \ref{LemmSolutionLowerBound}, yielding 
$$
\begin{aligned}
f(\tau+ t_0+\cdots + t_n ,p) 
& \ge 
t_n t_{n-1}^2\cdots t^{2^k}_{n-k}\cdots  t_0^{2^n} e^{- t_n  L_*(\sqrt{2}^n \delta)}\cdots e^{-2^n t_0 L_*(\delta) }  
\\& \quad \times 
\Big(C_p\Big)^{1+ 2+\cdots + 2^n} \theta_0^{2^{n+1}},
\end{aligned}
$$
for all $p \in B_{\sqrt{2}^{n+1} \delta}$. By using $1+ 2+\cdots + 2^n = 2^{n+1} - 1$, the above is reduced to 
\begin{equation}\label{bd-tn} 
\begin{aligned}
f(\tau+t_0+\cdots + t_n ,p) 
& \ge 
 t_n t_{n-1}^2 \cdots t^{2^k}_{n-k}\cdots  t_0^{2^n} \theta_0
\Big( C_p \theta_0\Big)^{2^{n+1}-1} E_n,
\end{aligned}\end{equation}
for all $p \in B_{\sqrt{2}^{n+1} \delta}$, in which for convenience we have set 
\begin{equation}\label{def-Ennn} E_n : = e^{- t_n L_*(\sqrt{2}^n \delta)}\cdots e^{-2^k t_{n-k}  L_*(\sqrt{2}^{n-k}\delta) }\cdots e^{-2^n t_0 L_*(\delta) } . \end{equation}



%

~\\
{\bf Case 1: $|p|> \sqrt 2\delta$.} Recall that $\delta, t_0$ are fixed.  For each $p$ so that $|p|> \sqrt 2 \delta$, we take an integer $n$ satisfying  
\begin{equation}\label{psqrt2}\sqrt{2}^{n} \delta< |p| \le \sqrt{2}^{n+1} \delta.\end{equation}
In particular, $p \in B_{\sqrt{2}^{n+1} \delta}$ and \eqref{bd-tn} holds for arbitrary positive time steps $t_k$. 
We now fix an arbitrary time $t\in (\tau, t_*)$, with $t_* = 1/4$. We take $ t_k = t_0^k$ and choose $t_0$ so that $t_0 < \frac14$ and 
$$ \sum_{k=0}^n  t_k = t.$$
Such a choice of $t_0$ is possible by the definition of $t_*$. The lower bound \eqref{bd-tn} then reads
\begin{equation}\label{bd-tn11} 
\begin{aligned}
f(\tau+ t ,p) 
& \ge 
\theta_0
 t_n t_{n-1}^2\cdots t^{2^k}_{n-k}\cdots  t_0^{2^n}  \Big(C_p \theta_0\Big)^{2^{n+1}-1} E_n,
\end{aligned}\end{equation}
for all $t\in (\tau,t_*)$ and all $|p|> \sqrt 2 \delta$, with $n$ being defined by \eqref{psqrt2}.  

Note in particular that $t_0 \ge T_\tau$ for some positive time $T_\tau$, since $t\ge \tau$. Using this, we can estimate 
$$\begin{aligned}
 t_n t_{n-1}^2\cdots t^{2^k}_{n-k}\cdots  t_0^{2^n} &\ge T_\tau^{n+ 2(n-1)+\cdots + 2^k (n-k)+\cdots+2^n}  
\\
&\ge 
T_\tau^{2^n + \sum_{k=0}^n 2^k (n-k)}  
\\& \ge T_\tau^{2^n (1+ \sum_{k=0}^\infty k 2^{-k})}  
\\
& = \mathcal{C}_0^{2^n}
\end{aligned}$$
in which $\mathcal{C}_0 = T_\tau^{1+ \sum_{k=0}^\infty k 2^{-k}}$, which is finite and nonzero. 

Next, by the definition \eqref{def-Cppp} of $C_p$, we have $C_p \ge C_{\delta}$ for some positive constant $C_{\delta}$, since $|p|> \sqrt 2 \delta$, and hence 
$$
\begin{aligned}
\theta_0
\Big( C_p\ \theta_0\Big)^{2^{n+1}-1} 
&\ge \theta_0 \Big( C_{\delta} \theta_0\Big)^{2^{n+1}-1} 
\geq\mathcal{C}_1(\mathcal{C}_2)^{2^n}
\end{aligned}$$
for some positive constants $\mathcal{C}_1$ and $\mathcal{C}_2$, independent of $n, p$ and $t$.

Finally, we estimate the exponential term $E_n$ defined as in \eqref{def-Ennn}. Recalling that $4 t_0<1$, $t_k = t_0^k$ and $L_*(\delta) = c_0 (1 + \delta^{3\rho+1})$, we have 
$$
\begin{aligned}
e^{-2^k t_{n-k} L_*(\sqrt{2}^{n-k}\delta) } 
&\ge e^{-2^{k} t_0^{n-k} c_0 (1 + \sqrt{2}^{4(n-k)}\delta^{3\rho+1}) } 
\\
&\ge e^{-2^{k}c_0[t_0^{n-k} +  (4t_0)^{n-k}\delta^{3\rho+1}]}
\\
& = e^{-2^{k}c_0[1 +  \delta^{3\rho+1}]}.
\end{aligned}$$
Hence, we obtain $$
\begin{aligned}
 E_n  &= \exp \Big(- \sum_{k=0}^n 2^k t_{n-k} L_*(\sqrt{2}^{n-k}\delta) \Big) 
 \\
 &\ge  \exp \Big(- c_0[1 +  \delta^{3\rho+1}]\sum_{k=0}^n { 2}^k\Big)
 \\
 &\ge  \exp \Big(- c_0[1 +  \delta^{3\rho+1}]2^n\Big)
\\& = \mathcal{C}_3^{2^n} ,\end{aligned} $$  
for some positive constant $\mathcal{C}_3$, which is independent of $n,p$, and $t$. 

Putting the above bounds into \eqref{bd-tn11}, we have obtained 
\begin{equation}\label{G-t123}
f(\tau+ t,p)
\geq \frac12 \mathcal{C}_1(\mathcal{C}_0 \mathcal{C}_2\mathcal{C}_3)^{2^n}  =\theta_1 e^{- \theta_2 2^n} \ge \theta_1 e^{-\theta_3 |p|^2}
\end{equation}
for all $t\in [\tau,t_*]$ and all $p$ satisfying \eqref{psqrt2}, with $\theta_1 =  \mathcal{C}_1$, $\theta_2 = \log \frac{1}{\mathcal{C}_0 \mathcal{C}_2\mathcal{C}_3}$ and $\theta_3 = \theta_2/(2\delta^2)$. Here, we stress that the constants $\theta_j$ are independent of $p$ and $t$.


~\\
{
{\bf Case 2: $|p|\le \sqrt2 \delta$.} In this case, we shall use the differential inequalities \eqref{Low-ineq} and \eqref{LossBound2}
$$
\partial_t f  \ge Q_\mathrm{gain}[f](p)  - [C_1 \mathcal{P}|p|^{\rho} + C_1\mathcal{N}|p|^{2\rho} + C_1 |p|^{3\rho+1}] f. 
$$
Therefore
\begin{eqnarray}\label{TheoremLowerBound:10}
f(t,p)&\geq &e^{-C_1 \mathcal{P}|p|^{\rho}t - C_1|p|^{2\rho}t\mathcal{N} - C_1 |p|^{3\rho+1}t}f_0(p).\end{eqnarray}
Equation \eqref{TheoremLowerBound:10} implies that for a fixed time $t_0$ and for a fixed $p_0$, then for all $|p|<|p_0|$, 
\begin{eqnarray}\label{TheoremLowerBound:11}
f(t,p)&\geq &e^{-C_1 \mathcal{P}|p_0|^{\rho}t_0 - C_1|p_0|^{2\rho}{t_0}\mathcal{N}  - C_1 |p_0|^{3\rho+1}{t_0}}f_0(p).\end{eqnarray}
Therefore, there exists $c'>0$: $f(t_0,p)>c'$ for all $|p|<|p_0|$. For each $p$, by repeating the same argument as in Case 1, in which $\delta$ is replace by $|p|/{\sqrt2}$, we can conclude that there exists $T_p$ and $b_{|p|}$ such that for all $t>T_p$, we have $$f(t,p')>b_{|p|}>0$$ for all $\sqrt2\delta\ge |p'|>|p|$ . 

Note that $f$ is continuous in $p$, then \eqref{ZeroMomentum} implies $f(t,0)= f_0(0)$. We prove that there exists a universal constant $r^*>0$ such that $f(t,p)$ is uniformly bounded from below for all $t_*\ge t\ge 0$ and $|p|\le r^*$:
\begin{equation}\label{LowerLinfty}
f(t,p)\ge C_{r^*}>0, \ \ \  \forall t_*\ge t\ge  0, |p|\le r^*.
\end{equation}

Combining the above yields the existence of a positive constant $C>0$, which is the lower bound of $f(t,p)$ in the ball $\{ |p|\le \sqrt2 \delta\}$.}

~\\
{\bf Iteration.} To conclude, we have obtained the Gaussian bound 
\begin{equation}\label{G-t1234}
f( t,p) \ge \theta_3 e^{-\theta_4 |p|^2}, \qquad p\in \RR^3, \qquad t\in [\tau,\tau+t_*], 
\end{equation}
for some universal constants $\theta_3, \theta_4$ that are independent of $p$ and $t$. Here, $t_* = 1/4$. By induction, for each integer $k\ge 1$, we then repeat the above proof, starting with initial data at $t = k t_*$. This yields the same Gaussian bound on the each time interval $[\tau+k t_*, \tau+ (k+1)t_*]$, upon noting that such a bound depends only on the  mass and second order energy-moment at $t = kt_*$, which is independent of $k^{th}$ iteration. This proves the Gaussian lower bound for all time $t\ge \tau$, and hence the main theorem.

~\\
~~ \\{\bf Acknowledgements.} 
The authors thank the referees for their constructive comments. TN's research was supported in part
by the NSF under grant DMS-1405728. M.-B Tran has been supported by NSF Grant DMS-1814149, NSF Grant RNMS (Ki-Net) 1107444. M.-B Tran would like to thank Professor Daniel Heinzen, Professor Linda Reichl, Professor Mark Raizen and Professor Robert Dorfman for fruitful discussions on the topic. The research was carried on while M.-B. Tran was visiting  University of Texas at Austin and  Penn State University. He would like to thank the institutions for the hospitality.

~~\\
{\bf Conflict of Interest:} The authors declare that they have no conflict of interest. 

  \bibliographystyle{plain}

\end{document}